
\documentclass[12pt]{amsart}
\usepackage{amsmath}
\usepackage{amssymb}
\usepackage{ifthen}
\usepackage[english]{babel}
\usepackage{epsfig}
\setlength{\textheight}{20cm} \textwidth16cm \hoffset=-2truecm
\nonstopmode

\usepackage{mathrsfs}
\usepackage{graphicx}
\usepackage{latexsym}
\usepackage{verbatim}

\numberwithin{equation}{section}

\newcommand {\mo} {\,{\rm mod}\,}
\newcommand{\diam}{{\rm diam}\,}
\newcommand{\dist}{{\rm dist}\,}
\newcommand{\Int}{{\rm Int}\,}

\renewcommand{\Im}{{\rm Im\,}}

\newcommand {\Sn} {{\overline{\mathbb R}^n}}
\newcommand {\Hn} {{\mathbb H^n}}
\newcommand {\ring} {{\mathcal R}}
\newcommand {\T} {{\mathcal T}}
\newcommand {\es} {{\mathcal S}}
\newcommand {\C} {\mathbb C}
\newcommand {\R} {\mathbb R}
\newcommand {\B} {\mathbb B}
\newcommand {\K} {\mathbf K}

\newcommand{\M}{\mathsf{M}}

\newcommand {\A} {\mathcal A}
\newcommand {\aand} {\quad\text{and}\quad}
\newcommand {\ep} {\varepsilon}
\def\QED{{\hfill $\Box$\par\medskip}}

\theoremstyle{theorem}

\setcounter {result} {64}




\emergencystretch15pt \frenchspacing

\numberwithin{equation}{section}

\newenvironment{thm}[1][]{%
\refstepcounter{equation}%
\medskip%
\noindent%
\arabic{section}.\arabic{equation} {\bf Theorem}%
\ifthenelse{\equal{#1}{}}{}{ (#1)}%
{\bf .}
\itshape}{\medskip}

\newenvironment{cor}[1][]{%
\refstepcounter{equation}%
\medskip%
\noindent%
\arabic{section}.\arabic{equation} {\bf Corollary}%
\ifthenelse{\equal{#1}{}}{}{ (#1)}%
{\bf .}
\itshape}{\medskip}

\newenvironment{lem}[1][]{%
\refstepcounter{equation}%
\medskip%
\noindent%
\arabic{section}.\arabic{equation} {\bf Lemma}%
\ifthenelse{\equal{#1}{}}{}{ (#1)}%
{\bf .}
\itshape}{\medskip}

\newenvironment{prop}[1][]{%
\refstepcounter{equation}%
\medskip%
\noindent%
\arabic{section}.\arabic{equation} {\bf Proposition}%
\ifthenelse{\equal{#1}{}}{}{ (#1)}%
{\bf .}
\itshape}{\medskip}




\renewcommand{\subsection}[1]%
{\smallskip \noindent \refstepcounter{equation}\arabic{section}.\arabic{equation} {\bf #1.}}

\begin{document}
\title[Teichm\"uller's theorem in higher dimensions]{Teichm\"uller's theorem in higher dimensions \\ and its applications}
\subjclass[2010]{Primary: 30C65; Secondary: 26B35, 30C75, 31B15 }
\keywords{Teichm\"uller ring, modulus of a ring, uniformly perfect, quasiconformal map}

\author[A. Golberg]{Anatoly Golberg}
\address{Department of Mathematics \\
Holon Institute of Technology \\
52 Golomb St., P.O.B. 305, Holon 5810201, Israel}
\email{golberga@hit.ac.il}

\author[T. Sugawa]{Toshiyuki Sugawa}
\address{Graduate School of Information Sciences \\
Tohoku University \\
Aoba-ku, Sendai 980-8579, Japan}
\email{sugawa@math.is.tohoku.ac.jp}

\author[M. Vuorinen]{Matti Vuorinen}
\address{Department of Mathematics and Statistics, University of Turku, FI-20014 Turku, Finland}
\email{vuorinen@utu.fi}

\begin{abstract}
For a given ring (domain) in $\overline{\mathbb{R}}^n$ 
we discuss whether its boundary components can be separated by an annular ring
with modulus nearly equal to that of the given ring.
In particular, we show that, for all $n\ge 3\,,$ the standard definition of
uniformly perfect sets in terms of Euclidean metric is equivalent
to the boundedness of moduli of separating rings.
We also establish separation theorems for a ``half" of a ring.
As applications of those results, we will prove boundary H\"older continuity
of quasiconformal mappings of the ball or the half space in $\mathbb{R}^n.$
\end{abstract}
\maketitle


{\small{\em{\centerline{Dedicated to the memory of Professor Stephan Ruscheweyh}}}}

\medskip

\bigskip

\section{Introduction}
A doubly connected domain $\ring$ in the complex plane $\C$ is called a ring domain
or, simply, a ring.
By the Uniformization Theorem, the ring $\ring$ is conformally equivalent to the annulus
$\{z\in\C: r_0<|z|<r_1\}$ for some $0\le r_0<r_1\le\infty.$
We will exclude here the doubly degenerate case when $r_0=0$ and $r_1=\infty.$
The quantity $\log(r_1/r_0)$ is called the modulus of $\ring$ and denoted by
$\mo\ring.$
(Note that in the literature  the modulus is sometimes defined as $ \frac{1}{2 \pi}\log (r_1/r_0)\,.$)
O.~Teichm\"uller \cite{Teich} showed that a ring $\ring$ with $\mo\ring>\pi$
separating $0$ and $\infty$ contains a circle centered at $0$ and that
the constant $\pi$ is sharp (see also \cite{ABP}).
Indeed, the Teichm\"uller ring $R_T(t)=\C\setminus\big([-1,0]\cup [t,+\infty)\big)$
with $t=1$ serves as an extremal case.
Teichm\"uller introduced the Gr\"otzsch ring and the Teichm\"uller ring and
found their extremal properties in \cite{Teich}
(see Lemmas \ref{lem:GR} and \ref{lem:TR} below for details).
Using the extremal property of the Teichm\"uller ring,
D.~A.~Herron, X.~Liu and D.~Minda \cite{HLM89} showed the following sharp result.

\begin{thm}[Herron-Liu-Minda]
Let $\ring$ be a ring separating $0$ and $\infty$ in $\C$ with $m=\mo\ring>\pi.$
Then $\ring$ contains an annular subring $\A$ of the form $\{z: r_0<|z|<r_1\}$
with
$$
\mo\A=\log \mu_T^{-1}(m),
$$
where $\mu_T(t)=\mo R_T(t)$ for $0<t<+\infty.$
The result is sharp.
\end{thm}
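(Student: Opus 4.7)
\noindent\emph{Proof plan.}
The strategy is to take $\A$ to be the largest concentric open annulus contained in $\ring$ and then to bound $\mo\A$ from below using the extremal property of the Teichm\"uller ring.

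\emph{Step 1 (construction of $\A$).} Denote by $E_0$ and $E_\infty$ the connected components of $\overline{\C}\setminus\ring$ containing $0$ and $\infty$ respectively; both are continua, $E_0$ is compact in $\C$, and $E_\infty\cap\C$ is closed in $\C$ with $\infty$ as a limit point. Set
$$
r_0:=\max_{z\in E_0}|z|,\qquad r_1:=\inf_{z\in E_\infty\cap\C}|z|,
$$
both attained (by compactness) at points $a\in E_0$ and $b\in E_\infty\cap\C$. Since $m=\mo\ring>\pi$, Teichm\"uller's classical circle theorem quoted above yields a circle of some radius $r$ centered at $0$ contained in $\ring$, forcing $r_0<r<r_1$. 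Consequently
$$
\A:=\{z\in\C:r_0<|z|<r_1\}\subset\ring,\qquad\mo\A=\log(r_1/r_0),
$$
because any $z$ with $r_0<|z|<r_1$ misses both $E_0$ and $E_\infty$ by the defining properties of $r_0,r_1$.

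\emph{Step 2 (Teichm\"uller extremal bound).} The ring $\ring$ separates the continuum $E_0\supset\{0,a\}$, with $|a|=r_0$, from the continuum $E_\infty\supset\{b,\infty\}$, with $|b|=r_1$. Invoke the extremal property of the Teichm\"uller ring (Lemma~\ref{lem:TR} below) in its angle-free form, depending only on the extremal moduli of the two boundary continua:
$$
m=\mo\ring\le\mu_T\!\left(\tfrac{r_1}{r_0}\right).
$$
Since $\mu_T:(0,\infty)\to(0,\infty)$ is a strictly increasing homeomorphism, inversion gives $r_1/r_0\ge\mu_T^{-1}(m)$, i.e.\ $\mo\A\ge\log\mu_T^{-1}(m)$. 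Shrinking $\A$ if necessary to a sub-annulus of modulus exactly $\log\mu_T^{-1}(m)$ yields the required annular subring.

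\emph{Step 3 (sharpness).} Taking $\ring=R_T(t)$ with $t:=\mu_T^{-1}(m)$, one has $E_0=[-1,0]$ and $E_\infty=[t,+\infty]$, hence $r_0=1$ and $r_1=t$. The unique largest concentric open annulus in $R_T(t)$ is $\{1<|z|<t\}$, of modulus $\log t=\log\mu_T^{-1}(m)$, so no bigger modulus can be guaranteed.

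\emph{Principal obstacle.} All the substance sits in Step~2. The Teichm\"uller configuration places the extremal boundary points $-1\in E_0$ and $t\in E_\infty$ collinearly on the real axis, whereas in the generic situation $b$ lies at an arbitrary angle to $a$. Establishing the angle-free bound $\mo\ring\le\mu_T(r_1/r_0)$ is therefore the crux; it is handled either by appealing to the general formulation isolated in Lemma~\ref{lem:TR}, or directly by circular symmetrization about the positive real ray, which relocates $E_\infty$ onto a real ray while preserving its minimal modulus $r_1$ and controls $\mo\ring$ monotonically in the correct direction, thereby reducing to the standard Teichm\"uller extremal case.
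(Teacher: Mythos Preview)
Your argument is correct and coincides with the approach the paper uses for its $n$-dimensional generalization (Theorem~\ref{thm:teich}); the paper itself only cites the Herron--Liu--Minda theorem without reproving it, but the proof of Theorem~\ref{thm:teich} proceeds exactly as you do---normalize so that a farthest point of the bounded component lies at $-e_1$, then invoke the Teichm\"uller extremal property (Lemma~\ref{lem:TR}) to bound $\mo\ring$ by $\mu_T(r_1/r_0)$ and invert. Two small remarks: the appeal to Teichm\"uller's circle theorem in Step~1 is redundant, since the bound $m\le\mu_T(r_1/r_0)$ combined with $m>\pi=\mu_T(1)$ and the strict monotonicity of $\mu_T$ already forces $r_1/r_0>1$; and Lemma~\ref{lem:TR} as formulated in the paper is already angle-free (it assumes only $x_1,\infty\in C_1$ and concludes $\mo\ring\le\mo R_{T,n}(|x_1|)$), so your ``Principal obstacle'' is in fact absorbed into that lemma and no separate symmetrization step is required here.
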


From the inequality $\mu_T(t)<\log t+\pi$ for $t>1$ (see Lemma \ref{lem:mono} below), which is equivalent to $m<\log\mu_T^{-1}(m)+\pi$ for $m=\mu_T(t)>\pi$,
F.~G.~Avkhadiev and K.-J.~Wirths deduced a sharp explicit form of
the above theorem (see Theorem \ref{thm:AW} below).
For convenience, in this paper, we use the term Teichm\"uller's theorem
for this sort of separation results.
A subset $\es$ of $\C$ is called a semiring if it is homeomorphic
to the upper half $\{z\in\C: r_0\le |z|\le r_1, \Im z>0\}$ of the
closed annulus $r_0\le |z|\le r_1$ for some $0<r_0<r_1<\infty.$
V.~Gutlyanski\u\i, K.~Sakan and T.~Sugawa established a result
similar to Teichm\"uller's theorem for semirings in $\C$
and applied it to the study of boundary regularity of homeomorphisms
of the unit disk or the upper half-plane.

Our main goal in the present paper is to extend those results
to higher dimensions.
Indeed, the higher dimensional analogs of the Gr\"otzsch ring and
the Teichm\"uller ring were intensively studied (see, for instance,
\cite{AVV97} and \cite{GMP17}) and found many important applications
in the theory of quasiconformal and quasiregular mappings in higher dimensions.
However, it seems that higher dimensional analogs of Teichm\"uller's theorem are less known. 
One of such extremal problems is to find annular rings of the largest modulus 
which separate two pairs of points in $\overline{\mathbb R}^n$ and this problem has been studied in 
\cite{Bet03} and \cite{Ibr03} independently.
We will extend Teichm\"uller's theorem and its semiring counterpart
to higher dimensions and, as examples of applications, we give a
conformally invariant characterization of uniformly perfect sets
in $\Sn$ and we will give (at least conceptually) simple proofs for the known fact
that quasiconformal self-homeomorphisms of open balls or half-spaces
extend to the boundary in a H\"older continuous way.
We emphasize that our approach may allow us to weaken regularity or quasiconformality assumptions
of the mappings.
Such applications to mappings of finite directional dilatations will be presented in
our forthcoming paper.


\section{Gr\"otzsch and Teichm\"uller rings and related estimates}

\subsection{Modulus of curve family}
We denote by $\Sn$ the extended Euclidean $n$-space $\R^n\cup\{\infty\},$
which is homeomorphic to the $n$-sphere $\mathbb{S}^n.$
Throughout the paper, we will assume that $n$ is an integer greater than $1.$
Let $\Gamma$ be a family of curves in $\Sn.$
A Borel measurable function $\rho$ on $\R^n$ is called admissible for $\Gamma$
if $0\le \rho\le+\infty$ almost everywhere on $\R^n$ and if
$\int_\gamma\rho(x)|dx|\ge1$ for every locally rectifiable $\gamma\in\Gamma.$
The (conformal) modulus of $\Gamma$ is defined to be
$$
\M(\Gamma)=\inf_\rho \int_{\R^n}\rho(x)^n dm(x),
$$
where the infimum is taken over all admissible functions $\rho$ on $\R^n$ for $\Gamma$
and $dm$ is the Lebesgue measure on $\R^n.$

\subsection{Rings}
In the paper, a continuum will mean a connected, compact and non-empty set.
We call it non-degenerate if it contains more than one point.
A continuum $C\subsetneq\Sn$ is called {\it filled} if $\Sn\setminus C$ is connected.
For a pair of disjoint filled continua $C_0$ and $C_1$ in $\Sn,$
the set $\ring=\Sn\setminus(C_0\cup C_1)$ is open and connected and
will be called a {\it ring} and sometimes denoted by $\ring(C_0, C_1).$
The ring $\ring$ is said to have nondegenerate boundary if each component $C_j$
contains at least two points.
We will say that $\ring(C_0,C_1)$ separates a set $E$ if $\ring\cap E=\varnothing$ and if
$C_j\cap E\ne\varnothing$ for $j=0,1.$
In the sequel, when $\ring\subset\R^n,$
we will assume conventionally that $\infty\in C_1$ unless otherwise stated.

Let $\Gamma_\mathcal R$ be the family of all curves joining $C_0$ and $C_1$ in $\mathcal R.$
Then the modulus (called also the module) of  $\mathcal R$ is defined by
\begin{equation*}
\mo \mathcal R=\left[\frac{\omega_{n-1}}{\M(\Gamma_\mathcal R)}\right]^{1/(n-1)},
\end{equation*}
where $\omega_{n-1}$ denotes the area of the unit $(n-1)$-dimensional sphere.
More precisely,
$$
\omega_{n-1}=\frac{n\pi^{n/2}}{\Gamma((n/2)+1)}
=\begin{cases}
\dfrac{2k\pi^k}{k!} &~\text{if}~ n=2k, \\
\null & \vspace{-3mm}\\
\dfrac{(2k-1)k!2^{2k}\pi^{k-1}}{(2k)!} & ~\text{if}~ n=2k-1.
\end{cases}
$$
For the annular ring $\A(a; r_0,r_1)=\{x\in\mathbb R^n\,:\,r_0<|x-a|<r_1\},$
we ha\-ve $\mo \A(a;r_0,r_1)=\log(r_1/r_0)$ (see, for instance, \cite[pp. 22-23]{Vai71}).

A ring $\ring'$ is said to be a {\it subring} of a ring $\ring$ if $\ring'\subset\ring$ and if
each component of $\Sn\setminus\ring'$ intersects $\Sn\setminus\ring.$
By the monotonicity of the moduli of curves, we have the inequality $\mo \ring'\le \mo\ring.$

\subsection{Gr\"otzsch and Teichm\"uller rings}
Two canonical rings are of special interest because of 
the extremal properties of their moduli.
The first one is the Gr\"otzsch ring $R_{G,n}(s),$ $s>1,$ and defined by
\begin{equation*}
R_{G,n}(s)=\mathcal R(\overline{\mathbb B}^n, [se_1,\infty]).
\end{equation*}
Here $\mathbb B^n$ is the unit ball centered at the origin,
$\overline{\mathbb B}^n$ is its closure, and $e_1$ is the unit vector $(1,0,\ldots,0)$
in $\R^n.$
The second one is the Teichm\"uller ring $R_{T,n}(t),$ $t>0,$ and defined by
\begin{equation*}
R_{T,n}(t)=\mathcal R([-e_1,0], [te_1,\infty]).
\end{equation*}
The functions $\gamma_n(s)=\M(\Gamma_{R_{G,n}(s)})$ and
$\tau_n(t)=\M(\Gamma_{R_{T,n}(t)})$ are intensively studied in \cite{AVV97}.
For example, these functions are strictly decreasing and continuous functions.
The Gr\"otzsch and Teichm\"uller rings have the following extremal properties.
See \cite[5.4.1, pp. 181-182]{GMP17} for their proofs.

\begin{lem}\label{lem:GR}
Let $\ring$ be the ring $\ring(\overline\B^n,C_1)$
for a filled continuum $C_1$ with $y, \infty\in C_1$ in the domain $|x|>1.$
Then the inequality
$$
\mo\ring\le\mo R_{G,n}(|y|)
$$
holds; equivalently,
$$
\M(\Gamma_\ring)\ge \gamma_n(|y|).
$$
\end{lem}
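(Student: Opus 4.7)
The plan is to prove the extremal property of the Gr\"otzsch ring via \emph{spherical symmetrization} about a ray, following the classical approach of Gehring. First, since the modulus is invariant under orthogonal rotations fixing the origin, and such rotations fix $\overline{\B}^n$, I may assume without loss of generality that $y=|y|e_1$ lies on the positive $x_1$-axis.

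Let $L=\{te_1:t\ge 0\}$. The next step is to apply the spherical symmetrization of $C_1$ with respect to $L$, producing a set $C_1^*$ whose intersection with each sphere $\{|x|=r\}$ is either empty or a closed spherical cap centered at $re_1$ having the same $(n-1)$-dimensional measure as $C_1\cap\{|x|=r\}$. Classical symmetrization theory (as in \cite{GMP17}) guarantees that $C_1^*$ is a filled continuum with $\infty\in C_1^*$, that $C_1^*\subset \{|x|\ge 1\}$ since $C_1$ is there, and that symmetrization does not increase the conformal modulus of curve families joining the two complementary continua; since $\overline{\B}^n$ is rotationally symmetric about $L$, it is unchanged. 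Setting $\ring^*=\Sn\setminus(\overline{\B}^n\cup C_1^*)$, this yields
$$
\M(\Gamma_{\ring^*})\le \M(\Gamma_\ring),\qquad\text{equivalently}\qquad \mo\ring\le\mo\ring^*.
$$

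Now I would verify that $C_1^*\supset[|y|e_1,\infty]$. The map $x\mapsto|x|$ sends the connected set $C_1$ onto a connected subset of $[1,\infty]$ containing $|y|$ and $\infty$, hence onto an interval $[\rho_0,\infty]$ with $\rho_0\le|y|$. For every $r\ge|y|$, the spherical cap $C_1^*\cap\{|x|=r\}$ is non-empty and centered at $re_1$, so $re_1\in C_1^*$. Consequently $[|y|e_1,\infty]\subset C_1^*$, so $\ring^*$ is contained in $R_{G,n}(|y|)$ and the complementary components of $\ring^*$ meet those of $R_{G,n}(|y|)$ (namely $\overline{\B}^n$ meets itself and $C_1^*$ meets $[|y|e_1,\infty]$). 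Hence $\ring^*$ is a subring of $R_{G,n}(|y|)$, and the monotonicity of the modulus under passage to subrings gives $\mo\ring^*\le \mo R_{G,n}(|y|)$. Combining the two inequalities yields the desired bound $\mo\ring\le\mo R_{G,n}(|y|)$.

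The main obstacle is justifying the spherical symmetrization step rigorously: one must verify that $C_1^*$ is indeed a continuum (which requires careful handling of the radial cross-sections and of the point at infinity) and that the modulus inequality $\M(\Gamma_\ring)\ge \M(\Gamma_{\ring^*})$ holds in full generality for filled continua in $\Sn$. Both of these assertions are classical but delicate, and they are precisely the content of \cite[5.4.1]{GMP17}; once they are granted, the rest of the argument is the short chain of inequalities described above.
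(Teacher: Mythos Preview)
Your sketch is correct and follows the classical Gehring spherical-symmetrization argument; the paper itself does not give a proof of this lemma but simply refers the reader to \cite[5.4.1, pp.~181--182]{GMP17}, which is exactly the source you invoke for the delicate symmetrization step. So your approach coincides with the one the paper defers to.
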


\begin{lem}\label{lem:TR}
For filled continua $C_0, C_1$ with $0, -e_1\in C_0$ and $x_1,\infty\in C_1,$ the following inequality holds:
\begin{equation*}
\mo \mathcal R(C_0,C_1)\le \mo R_{T,n}(|x_1|).
\end{equation*}
\end{lem}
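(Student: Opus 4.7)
The plan is to reduce the arbitrary configuration to the extremal one by spherical symmetrization with respect to the $x_1$-axis, following the classical scheme of Gehring. Since the modulus is invariant under rotations about the origin, I would first rotate $C_0$ and $C_1$ simultaneously so as to arrange $x_1=te_1$ with $t=|x_1|$.

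For each $r>0$ let $\Sigma_r=\{x\in\R^n:|x|=r\}$, and define $C_0^{*}$ (resp.\ $C_1^{*}$) by replacing the section $C_0\cap\Sigma_r$ (resp.\ $C_1\cap\Sigma_r$) by the closed spherical cap on $\Sigma_r$ centered at $-re_1$ (resp.\ $+re_1$) whose $(n-1)$-dimensional surface measure equals that of the original section, with the convention that the cap is empty when the section is empty and the whole sphere when the section has full measure. Since $C_0$ and $C_1$ are disjoint, the two section measures on $\Sigma_r$ sum to at most the total surface measure of $\Sigma_r$; the antipodally centered caps can therefore be chosen disjoint, and standard symmetrization lemmas ensure that $C_0^{*}$ and $C_1^{*}$ remain disjoint filled continua and that $\ring^{*}:=\Sn\setminus(C_0^{*}\cup C_1^{*})$ is again a ring. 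Because $C_0$ is a continuum containing both $0$ and $-e_1$, the section $C_0\cap\Sigma_r$ is nonempty for every $r\in[0,1]$, giving $-re_1\in C_0^{*}$ for all such $r$ and hence $[-e_1,0]\subset C_0^{*}$; the analogous argument for $C_1$ delivers $[te_1,\infty]\subset C_1^{*}$.

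Gehring's spherical symmetrization theorem for the conformal modulus then supplies the key inequality $\M(\Gamma_{\ring^{*}})\le\M(\Gamma_{\ring})$, which, by the definition of $\mo$, is equivalent to $\mo\ring\le\mo\ring^{*}$. The two inclusions just established show that $\ring^{*}\subset R_{T,n}(t)$ and that each component $C_j^{*}$ of $\Sn\setminus\ring^{*}$ meets the corresponding component of $\Sn\setminus R_{T,n}(t)$, so $\ring^{*}$ is a subring of $R_{T,n}(t)$; monotonicity of the modulus under subring containment then yields $\mo\ring^{*}\le\mo R_{T,n}(t)$. Chaining these inequalities gives the desired conclusion $\mo\ring\le\mo R_{T,n}(|x_1|)$.

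The main obstacle I foresee is not any single calculation but rather the bookkeeping needed to invoke Gehring's inequality cleanly in dimension $n\ge 2$: one must verify that $C_0^{*}$ and $C_1^{*}$ are bona fide filled continua (connected, compact, with connected complement in $\Sn$), that they remain disjoint after symmetrization, and that the modulus inequality holds in the stated direction. Each of these facts is classical but technically delicate; in a finished write-up I would cite the relevant statements from \cite{GMP17} or the original work of Gehring rather than reprove them.
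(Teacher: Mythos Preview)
The paper does not prove this lemma in the text; it simply refers the reader to \cite[5.4.1, pp.~181--182]{GMP17}. Your proposal is exactly the classical spherical symmetrization argument of Gehring that one finds there, so the approaches coincide.

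Two small remarks. First, the preliminary rotation sending $x_1$ to $te_1$ is unnecessary: the symmetrization argument only uses that $C_0$ meets every sphere $\Sigma_r$ with $0\le r\le 1$ and that $C_1$ meets every sphere $\Sigma_r$ with $r\ge |x_1|$, and both of these follow from connectedness and the radii of the marked points, independent of their directions. Second, after such a rotation $C_0$ no longer literally contains $-e_1$ but rather the rotated image of $-e_1$; since that image still has norm $1$, your conclusion $C_0\cap\Sigma_r\ne\varnothing$ for $r\in[0,1]$ is unaffected, so this is only a wording slip. The rest of the sketch, including the direction of the Polya--Szeg\H{o}/Gehring inequality and the subring monotonicity step, is correct.
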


The relation between the moduli of these special rings can be written
\begin{equation*}
\mo R_{T,n}(t)\,=\,2 \mo R_{G,n}(s),\qquad s=\sqrt{t+1}\,.
\end{equation*}
See \cite{AVV97}, \cite{Zor17}.
The real-valued functions $\Phi_n$ and $\Psi_n$ defined by
\begin{align*}
\log\Phi_n(s)&=\mo R_{G,n}(s)=\left[\frac{\omega_{n-1}}{\gamma_n(s)}\right]^{1/(n-1)}, \\
\log\Psi_n(t)&=\mo R_{T,n}(t)=\left[\frac{\omega_{n-1}}{\tau_n(t)}\right]^{1/(n-1)}\,,
\end{align*}
are of a special interest and have frequent applications in Complex and Real Analysis.
When $n=2,$ explicit forms of $\mu_G(t)=\log\Phi_2(s)$ and $\mu_T(t)=\log\Psi_2(t)$ are known.
Indeed,
\begin{equation}\label{eq:GT}
\mu_G(s)=\mu\left(\frac1s\right)
\aand
\mu_T(t)=2\mu\left(\frac1{\sqrt{t+1}}\right)=\pi\cdot\frac{\K(\frac t{t+1})}{\K(\frac1{t+1})},
\end{equation}
where $\mu(r)=(\pi/2)\K(1-r^2)/\K(r^2)$ and
$\K(w)$ denotes the complete elliptic integral of the first kind
(see \cite[Chap.~5]{AVV97} for details):
$$
\K(w)=\int_0^1\frac{dx}{\sqrt{(1-x^2)(1-wx^2)}}.
$$

\subsection{Basic properties of $\Phi_n(s)$ and $\Psi_n(t)$}
Following \cite{AVV97} and \cite{GMP17},
we recall several properties of the above quantities:

\noindent (a) the function $s\mapsto \Phi_n(s)/s$ is nondecreasing on $(1,\infty);$

\noindent (b) $\lim_{s\to \infty} \Phi_n(s)=\infty;$

\noindent (c) $\lim_{s\to 1^+} \Phi_n(s)=1;$

\noindent (d) the Gr\"otzsch (ring) constant $\lambda_n:= \lim_{s\to \infty} \Phi_n(s)/s$ exists in $(1,\infty];$

\noindent (e) $4\le\lambda_n \le 2^{n/(n-1)}e^{n(n-2)/(n-1)};$

\noindent (f) $\lambda_2=4$ and the exact value of $\lambda_n$ is unknown for $n\ge 3$;

\noindent (g) the function $\Phi_n$ is strictly increasing and continuous on the interval $(1,\infty),$ and
\begin{equation*}
s\,\le\,\Phi_n(s)\,\le\,\lambda_n s;
\end{equation*}

\noindent (h) the function $\Psi_n$ is strictly increasing and continuous on the interval $(0,\infty),$ and
\begin{equation}\label{estTr}
t+1\,\le\,\Psi_n(t)\,\le\,\lambda_n^2 (\sqrt{1+t}+\sqrt{t})^2/4;
\end{equation}


\noindent (i) if $R_E(n,a)=\mathcal R(C_0, C_1),$ where $C_1=\left\{x\in\mathbb R^n: \frac{x_1^2}{a^2+1}+\frac{x_2^2}{a^2}+\ldots+\frac{x_n^2}{a^2}\ge 1\right\}$ with $a>1$
and $C_0=\{te_1: t\in [-1,1]\},$ the modulus of this ring admits the following upper bound in terms of elliptic integrals
\begin{equation*}
\mo R_E(n,a)\,\le\,\int\limits_1^b\left(\frac{r^2+1}{r^2-1}\right)^{(n-2)/(n-1)}\frac{dr}{r},\quad b=a+\sqrt{a^2+1}\,,
\end{equation*}
for the planar case we have the equality
\begin{equation*}
\mo R_E(2,a)\,=\,\log b,
\end{equation*}
and the Gr\"otzsch constant can be found by
\begin{equation*}
\log \lambda_n\,=\,\lim\limits_{a\to\infty}\left(\mo R_E(n,a)-\log\frac{a}{2}\right)\,.
\end{equation*}

We take this opportunity to give another proof of the inequality
used by Avkhadiev and Wirths to show Theorem \ref{thm:AW} below.
Note that they used an infinite product expansion of the inverse
function $\mu_T^{-1}(m).$

\begin{lem}\label{lem:mono}
The function $\Psi_2(t)/t$ is strictly decreasing in $0<t<+\infty.$
Also the inequality $\mu_T(t)<\log t+\pi$ holds for $t>1.$
\end{lem}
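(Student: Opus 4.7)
The plan is to reduce everything to the standard function $\mu(r)=(\pi/2)\K(1-r^2)/\K(r^2)$ via the substitution $r=1/\sqrt{t+1}\in(0,1)$. By the identities $\log\Psi_2(t)=2\log\Phi_2(\sqrt{t+1})=2\mu(1/\sqrt{t+1})$ recorded in the previous subsections, together with $t=(1-r^2)/r^2$, one has
\[
\log\!\left(\frac{\Psi_2(t)}{t}\right)=2\mu(r)+2\log r-\log(1-r^2).
\]
Because $t\mapsto r$ is a strictly decreasing bijection $(0,\infty)\to(0,1)$, the monotonicity claim for $\Psi_2(t)/t$ is equivalent to strict monotonicity (now increasing) of the right-hand side as a function of $r\in(0,1)$.

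For the differentiation step I will invoke the classical formula
\[
\mu'(r)=-\frac{\pi^2}{4\,r(1-r^2)\,\K(r^2)^2},
\]
which is a direct consequence of Legendre's relation for complete elliptic integrals. Combining it with the elementary identity $2/r+2r/(1-r^2)=2/[r(1-r^2)]$ yields
\[
\frac{d}{dr}\bigl[2\mu(r)+2\log r-\log(1-r^2)\bigr]=\frac{1}{r(1-r^2)}\left[\,2-\frac{\pi^2}{2\,\K(r^2)^2}\,\right].
\]
The bracketed factor is strictly positive exactly when $\K(r^2)>\pi/2$, and this inequality holds for every $r\in(0,1)$ since $\K$ is strictly increasing on $[0,1)$ with $\K(0)=\pi/2$. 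This proves the first assertion.

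For the bound $\mu_T(t)<\log t+\pi$ when $t>1$ I will simply exploit the first part together with a boundary value. Writing $\mu_T(t)-\log t=\log(\Psi_2(t)/t)$, the function $t\mapsto\mu_T(t)-\log t$ is strictly decreasing, so it suffices to check that $\mu_T(1)=\pi$. At $t=1$ one has $r=1/\sqrt{2}$, hence $\K(r^2)=\K(1-r^2)$, so $\mu(1/\sqrt{2})=\pi/2$ and $\mu_T(1)=2\mu(1/\sqrt{2})=\pi$. Strict monotonicity then gives $\mu_T(t)-\log t<\pi$ for every $t>1$.

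The only step requiring more than elementary calculus is the derivative formula for $\mu$, but this identity is standard and recorded in \cite{AVV97}; everything else is a short monotonicity check based on the universal lower bound $\K(w)\ge\pi/2$ and a symmetric evaluation at the self-complementary modulus $r=1/\sqrt{2}$. Thus the hardest conceptual point is simply recognising that the right substitution exposes the whole statement as a one-variable calculus exercise.
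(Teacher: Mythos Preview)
Your proof is correct and follows essentially the same approach as the paper: both arguments substitute $r=1/\sqrt{t+1}$, invoke the formula $\mu'(r)=-\pi^2/[4r(1-r^2)\K(r^2)^2]$ from \cite{AVV97}, reduce the sign question to the inequality $\K(r^2)>\pi/2$, and then evaluate at $t=1$ (i.e.\ $r=1/\sqrt2$) to obtain $\mu_T(1)=\pi$. The only cosmetic difference is that the paper differentiates $g(t)=2\mu(r(t))-\log t$ with respect to $t$ and obtains $g'(t)=\frac{r^2}{1-r^2}\bigl(\frac{\pi^2}{4\K(r^2)^2}-1\bigr)$, whereas you differentiate the equivalent expression in $r$; the two computations are chain-rule translates of one another.
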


\begin{proof}
We consider the function
$$
g(t)=\log\frac{\Psi_2(t)}t=\mu_T(t)-\log t
=2\mu\left(\frac1{\sqrt{t+1}}\right)-\log t.
$$
Put $r=1/\sqrt{t+1}\le 1/\sqrt 2$ for brevity.
Then $dr/dt=-r^3/2$ and $t=(1-r^2)/r^2.$
We now use the formula
$\mu'(r)=-\pi^2/\{4r(1-r^2)\K(r^2)^2\}$ \cite[(5.9)]{AVV97} to get
$$
g'(t)=-\frac{r^3}2\mu'(r)-\frac1t
=\frac{r^2}{1-r^2}\left(\frac{\pi^2}{4\K(r^2)^2}-1\right).
$$
Since ${\K}(r^2) >{\K}(0)= \pi/2 $ for all $r \in (0,1)\,,$
we have $g'(t) <0$ for $t>0\,.$
We thus conclude that $g(t)$ and $\Psi_2(t)/t$ are strictly decreasing in $0<t<+\infty.$
In particular, we have $\mu_T(t)-\log t=g(t)<g(1)=\pi$ for $t>1.$
This is equivalent to the second assertion.
\end{proof}

\section{Extension of Teichm\"uller's theorem}

\subsection{Teichm\"uller's theorem}
Roughly speaking, Teichm\"uller's theorem states that a ring in $\R^2=\C$ with modulus at least a certain number
should contain an annular ring of equal modulus up to a bounded term.
There are various versions of results of similar nature.
One of the most convenient results is the following theorem due to Avkhadiev and Wirths \cite[Theorem~3.17]{AW09}
(see also \cite{Sug10}).

\begin{thm}[Avkhadiev-Wirths]\label{thm:AW}
Let $\ring$ be a ring in $\C$ with $\mod \ring>\pi$ which separates
a given point $z_0\in\C$ from $\infty.$
Then there is a subring $\A$ of $\ring$ which has the form $\{z: r_0<|z-z_0|<r_1\}$
and satisfies the condition $\mod \A\ge\mod \ring-\pi.$
The constant $\pi$ cannot be replaced by any smaller number.
\end{thm}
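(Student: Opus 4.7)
After a translation of the plane (which preserves moduli and the class of annular subrings), we may assume $z_0=0$, so $\ring$ separates $0$ from $\infty$. Since $m:=\mo\ring>\pi$, the Herron--Liu--Minda theorem quoted in the introduction produces an annular subring
\[
\A=\{z\colon r_0<|z|<r_1\}\subset\ring
\]
with $\mo\A=\log\mu_T^{-1}(m)$. Because $m>\pi=\mu_T(1)$ and $\mu_T$ is strictly increasing, we have $t:=\mu_T^{-1}(m)>1$, so Lemma \ref{lem:mono} applies at this $t$ and gives
\[
m=\mu_T(t)<\log t+\pi=\log\mu_T^{-1}(m)+\pi=\mo\A+\pi.
\]
Thus $\mo\A>\mo\ring-\pi$, and undoing the translation exhibits the required annulus $\{z\colon r_0<|z-z_0|<r_1\}$ inside the original ring.

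For the sharpness of the constant $\pi$, I would use the Teichm\"uller ring itself: take $\ring=R_{T,2}(t)$ for $t>1$ and the distinguished point $z_0=0\in[-e_1,0]$. Any annular subring $\{r_0<|z|<r_1\}$ of $R_{T,2}(t)$ must avoid both $-e_1$ (in the bounded continuum) and $te_1$ (in the unbounded continuum), which forces $r_0\ge 1$ and $r_1\le t$, hence $\mo\A\le\log t$. Consequently
\[
\mo\ring-\sup_{\A}\mo\A\ \ge\ \mu_T(t)-\log t\ \longrightarrow\ \mu_T(1)-\log 1=\pi
\]
as $t\to 1^+$, by continuity of $\mu_T$. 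Therefore $\pi$ cannot be replaced by any strictly smaller constant.

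The whole argument is a one-line consequence of the Herron--Liu--Minda theorem combined with the inequality $\mu_T(t)<\log t+\pi$ of Lemma \ref{lem:mono}; the only ``hard'' ingredient is this inequality itself, which has already been established in the previous section via the identity for $\mu'(r)$ and the trivial bound $\K(r^2)>\K(0)=\pi/2$. Given that, there is no genuine obstacle: the main step is to recognize that the hypothesis $m>\pi$ is exactly what is needed to place $\mu_T^{-1}(m)$ in the range $t>1$ where Lemma \ref{lem:mono} gives information, and that the extremal case $t\to 1^+$ of the Teichm\"uller ring saturates the bound.
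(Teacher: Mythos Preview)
Your argument is correct and is precisely the route the paper attributes to Avkhadiev and Wirths in the introduction: combine the Herron--Liu--Minda theorem with the inequality $\mu_T(t)<\log t+\pi$ of Lemma~\ref{lem:mono}. The paper itself does not give a standalone proof of Theorem~\ref{thm:AW}; it cites the result and then recovers it as the case $n=2$ of Theorem~\ref{thm:teich}, where $A_2=\log\Psi_2(1)=\pi$ by Lemma~\ref{lem:mono}. That alternative proof is slightly more self-contained: rather than invoking Herron--Liu--Minda, it uses the Teichm\"uller extremal property (Lemma~\ref{lem:TR}) directly to verify that the explicit annulus $\{1<|z|<e^{m-\pi}\}$ (after normalization) is disjoint from $C_1$, and for sharpness it picks a single $t_0>1$ with $\mu_T(t_0)-\log t_0>A$ for any given $A<\pi$, which is marginally cleaner than your limiting argument but amounts to the same thing. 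Both approaches rest on the same inequality from Lemma~\ref{lem:mono}; your version trades the direct extremal argument for an appeal to the sharper Herron--Liu--Minda statement, which is a perfectly legitimate shortcut here.
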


\subsection{Extension to higher dimension}\label{ssec:An}
One of our main tools in the present paper is an analogue of Teichm\"uller's lemma in higher dimensions.
A ring $\A$ of the form $\A(a; r_0, r_1)=\{x\in\R^n: r_0<|x-a|<r_1\}$ for some
$0<r_0<r_1<+\infty$ and $a\in\R^n$ will be called an {\it annular ring} or an {\it annulus} (centered at $a$).
We recall that $\mo\A(a; r_0,r_1)=\log(r_1/r_0).$
Let
$$
A_n
=\sup_{1<t<+\infty}\big[\mo R_{T,n}(t)-\log t\big]
=\sup_{1<t<+\infty}\log\frac{\Psi_n(t)}{t}.
$$
Then we obtain the following theorem.

\begin{thm}\label{thm:teich}
Let $\mathcal R$ be a ring in $\overline{\mathbb R}^n$ separating a given point $x_0\in\R^n$
and $\infty$ and satisfying the condition $\mo \mathcal R>A_n.$
Then there exists an annular subring $\A$ of $\ring$ centered at $x_0$ with
$\mo \A\ge \mo \ring - A_n.$
The constant $A_n$ is sharp.
Moreover, the number $A_n$ admits the estimate
\begin{equation}\label{eq:An}
A_n\le 
2\log\frac{(1+\sqrt 2)\lambda_n}2
=\log \frac{(3+2\sqrt{2})\lambda_n^2}{4}.
\end{equation}
\end{thm}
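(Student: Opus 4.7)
The plan is to introduce the two natural radii associated with $x_0$ and to bound $\mo\mathcal{R}$ from above by Lemma \ref{lem:TR} after a modulus-preserving similarity. I translate so that $x_0=0$ and write $\mathcal{R}=\mathcal{R}(C_0,C_1)$ with $0\in C_0$ and $\infty\in C_1$; then set
\[
\rho_1=\max\{|x|:x\in C_0\}\quad\text{and}\quad \rho_0=\min\{|x|:x\in C_1\cap\R^n\}.
\]
Both are finite and positive (the minimum is attained because $|\cdot|$ is coercive on the closed set $C_1\cap\R^n$). Picking $y_0\in C_0$ with $|y_0|=\rho_1$, rotating so that $y_0=-\rho_1 e_1$, and dilating by $1/\rho_1$ produces a similar ring $\mathcal{R}'$ with $\mo\mathcal{R}'=\mo\mathcal{R}$, $0,-e_1\in C_0'$, and a point $x_1\in C_1'$ of modulus $\rho_0/\rho_1$; hence Lemma \ref{lem:TR} gives
\[
\mo\mathcal{R}\le\log\Psi_n(\rho_0/\rho_1).
\]

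Because $A_n\ge\lim_{t\to 1^+}\log(\Psi_n(t)/t)=\log\Psi_n(1)$, the hypothesis $\mo\mathcal{R}>A_n$ forces $\rho_0/\rho_1>1$, so the annulus $\A=\{x\in\R^n: \rho_1<|x-x_0|<\rho_0\}$ is a bona fide annular subring of $\mathcal{R}$ centered at $x_0$ with $\mo\A=\log(\rho_0/\rho_1)$. Subtracting,
\[
\mo\mathcal{R}-\mo\A\le\log\frac{\Psi_n(\rho_0/\rho_1)}{\rho_0/\rho_1}\le A_n
\]
directly from the definition of $A_n$. Sharpness is read off the Teichm\"uller ring itself: with $\mathcal{R}=R_{T,n}(t)$ and $x_0=0$ we have $\rho_1=1,\rho_0=t$, so the largest annular subring centered at $0$ has modulus $\log t$ and the deficit $\mo\mathcal{R}-\mo\A$ is exactly $\log(\Psi_n(t)/t)$; choosing $t$ close to wherever the supremum defining $A_n$ is approached shows that the constant cannot be reduced.

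Finally, for the numerical bound (\ref{eq:An}) I divide the estimate (\ref{estTr}) by $t$ to get
\[
\frac{\Psi_n(t)}{t}\le\frac{\lambda_n^2}{4}\left(2+\frac{1}{t}+2\sqrt{1+\frac{1}{t}}\right),
\]
whose right-hand side is manifestly decreasing in $t$ and attains its supremum $\lambda_n^2(1+\sqrt 2)^2/4$ as $t\to 1^+$; taking the logarithm gives (\ref{eq:An}). The main conceptual step in the argument is the recognition that the pair $(\rho_1,\rho_0)$---the outer radius of $C_0$ and the inner radius of $C_1$ about $x_0$---is precisely the normalization that places an arbitrary ring into the Teichm\"uller configuration; once that is in place, everything else reduces to the definition of $A_n$ and to routine monotonicity in the variable $t$.
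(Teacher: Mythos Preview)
Your proof is correct and follows essentially the same strategy as the paper's: normalize so that $x_0=0$ and a farthest point of $C_0$ sits at $-e_1$, then invoke Lemma~\ref{lem:TR} to bound $\mo\mathcal{R}$ by $\log\Psi_n$ of the relevant ratio, and read off both the existence of the annulus and the estimate~\eqref{eq:An} from the definition of $A_n$ and the bound~\eqref{estTr}. The only organizational difference is that you work directly with the \emph{maximal} annulus $\{\rho_1<|x|<\rho_0\}$ and obtain $\mo\A\ge\mo\mathcal{R}-A_n$ as an inequality, whereas the paper fixes the outer radius to be $r_1=\exp(\mo\mathcal{R}-A_n)$ and argues by contradiction that $C_1$ cannot meet $\{1<|x|<r_1\}$; these are two phrasings of the same computation.
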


When $n=2,$ by Lemma \ref{lem:mono}
$\Psi_2(t)/t$ is decreasing in $t>1$ so that $A_2=\log\Psi_2(1)=\pi,$
cf.~\eqref{eq:GT}.
Thus the theorem reduces to Theorem \ref{thm:AW} if $n=2.$
On the other hand, \eqref{eq:An} gives $A_2\le 2\log 2(1+\sqrt 2)\approx 3.14904.$
This bound already appeared in Corollary 3.5 of the paper \cite{HLM89} by Herron, Liu and Minda.
If we could show that $\Psi_n(t)/t$ is non-increasing in $1\le t<+\infty,$ 
we will have $A_n=\log\Psi_n(1).$

\begin{proof}
Let $\ring=\ring(C_0,C_1)$ with $\infty\in C_1.$
Put $r_0=\max\limits_{x\in C_0} |x-x_0|.$
By performing a suitable affine transformation,
one may assume without loss of generality that $x_0=0,$ $r_0=1,$ $-e_1\in C_0.$
Let $r_1=\exp(\mo\mathcal R -A_n)>1.$
Now we show that the annular ring $\A=\{1<|x|<r_1\}$ separates $C_0$ from $C_1.$
Clearly, $\A\cap C_0=\varnothing.$
Suppose, on the contrary, that $\A\cap C_1\ne\varnothing.$
That is, there is a point $x_1\in C_1$ with $|x_1|<r_1.$
By Lemma \ref{lem:TR} and the strict monotonicity of $\Psi(t)$ we have
\begin{equation*}
\log r_1+A_n=\mo \mathcal R\,\le\,\mo R_{T,n}(|x_1|)<\log\Psi(r_1).
\end{equation*}
This implies $A_n<\log\Psi(r_1)-\log r_1$ which does not agree with the definition of $A_n.$
Hence, we have shown that $\A$ is a subring of $\ring$ as required.

We next show that the constant $A_n$ cannot be replaced by a smaller one.
Let $0<A<A_n.$
Then there is a $t_0\in(1,\infty)$ such that $A<\log\Psi(t_0)-\log t_0<A_n.$
We now take $R_{T,n}(t_0)$ as $\ring.$
The maximal annular subring of $\ring$ centered at $0$ is obviously $\A=\{x: 1<|x|<t_0\}$
and the inequality $\mo \A=\log t_0<\log\Psi(t_0)-A=\mo\ring-A$ holds.
Therefore, we cannot replace $A_n$ by $A$ in the assertion of the theorem.

Finally we show \eqref{eq:An}.
By \eqref{estTr}, we observe that
$$
\frac{\Psi_n(t)}{t}\le \left(\frac{(\sqrt{1+t}+\sqrt t)\lambda_n}{2\sqrt{t}}\right)^2
<\left(\frac{(\sqrt2+1)\lambda_n}2\right)^2
$$
for $t>1.$
Hence,
$$
A_n\le 2\log\frac{(1+\sqrt 2)\lambda_n}2.
$$
\end{proof}

\subsection{Uniform perfectness}
A closed subset $E$ of $\Sn$ containing at least two points is said to be {\it uniformly perfect}
if there exists a constant $0<c<1$ such that
\begin{equation}\label{eq:UP}
\{x\in E: cr< |x-a|< r\}\ne\varnothing \quad
\text{for}~ a\in E\setminus\{\infty\},~0<r<\diam E.
\end{equation}
Here, we denote by $\diam E$ the Euclidean diameter of $E$ and set $\diam E=\infty$
when $\infty\in E.$
We can characterize uniformly perfect sets in a conformally invariant manner.
The planar case is classical, see \cite{SugawaUP} or \cite{AW09}.
For more information about uniformly perfect sets in $\Sn$ the reader may look at
\cite{JV96} (also \cite{SugawaUPS} for a survey and \cite[pp. 343-345]{gm} for many alternative
characterizations).

\begin{thm}\label{thm:sep0}
A closed set $E$ in $\Sn$ with $\sharp E\ge 2$ is uniformly perfect
if and only if there exists a constant $M>0$ such that an arbitrary ring $\ring$ in $\Sn$
which separates $E$ satisfies the inequality $\mo\ring\le M.$
\end{thm}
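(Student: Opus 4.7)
\medskip

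\noindent\textbf{Proof sketch.} The plan is to prove each implication separately, with the nontrivial work concentrated in the forward direction.

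For the backward direction (bounded modulus $\Rightarrow$ uniformly perfect) I argue by contrapositive. If $E$ is not uniformly perfect, then for every $c\in(0,1/2)$ one can find $a\in E\setminus\{\infty\}$ and $r\in(0,\diam E)$ with $E\cap\{x:cr<|x-a|<r\}=\varnothing$. The crucial observation is that the Euclidean annulus $\A=\A(a;cr,r)$ is automatically a ring separating $E$ in $\Sn$: its inner complementary component contains $a\in E$, and its outer complementary component must also meet $E$ because otherwise $E\subset\overline{B}(a,cr)$ would force $\diam E\le 2cr<r$, contradicting $r<\diam E$. Letting $c\to 0^+$ yields separating rings with $\mo\A=\log(1/c)$ arbitrarily large, ruling out any uniform bound $M$.

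For the forward direction (uniformly perfect $\Rightarrow$ bounded modulus) I take $\ring=\ring(C_0,C_1)$ separating $E$ with $\mo\ring>A_n$ and first handle the case $\infty\in C_1$, so that $C_0\cap E$ contains some point $x_0\in\R^n$. Applying Theorem~\ref{thm:teich} with center $x_0$ produces an annular subring $\A=\{x:r_0<|x-x_0|<r_1\}$ of $\ring$ with $\mo\A\ge\mo\ring-A_n$, and $\A\cap E=\varnothing$. Because $\A$ is a subring and $\infty\in C_1$, the continuum $C_1$ lies in $\{x:|x-x_0|\ge r_1\}\cup\{\infty\}$, and any point $y\in C_1\cap E$ is either $\infty$ (so $\diam E=\infty$) or satisfies $|y-x_0|\ge r_1$; in either case $r_1\le\diam E$, so uniform perfectness applied at $x_0$ with radius slightly smaller than $r_1$ supplies $z\in E$ with $cr<|z-x_0|<r\le r_1$. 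The disjointness $\A\cap E=\varnothing$ forces $r_0\ge cr$, and letting $r\to r_1^-$ gives $r_0\ge cr_1$, i.e., $\mo\A\le\log(1/c)$. Hence $\mo\ring\le A_n+\log(1/c)$, so $M:=A_n+\log(1/c)$ works in this case.

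The main technical obstacle is the remaining case $\infty\in\ring$, when both complementary continua are compact in $\R^n$ and Theorem~\ref{thm:teich} does not apply directly. My plan is to reduce to the previous case by a preliminary M\"obius inversion $\psi$ centered at a chosen point $y_1\in C_1\cap E$, which sends $\infty$ into $\psi(C_1)$ while preserving the modulus. The bound obtained from Theorem~\ref{thm:teich} then involves the uniform perfectness constant of $\psi(E)$ rather than of $E$, so to close the argument one has to verify the standard quasi-invariance of Euclidean uniform perfectness under inversion about a point of the set, yielding a final constant $M$ depending only on $c$ and on $n$ through $A_n$.
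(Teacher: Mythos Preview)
Your argument is correct. The backward direction matches the paper's almost verbatim. In the forward direction, however, your route diverges from the paper's in two respects.

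First, the case split is different. The paper distinguishes the cases $\infty\in E$ versus $E\subset\R^n$; you instead distinguish whether $\infty$ lies in a complementary continuum (after relabelling, $\infty\in C_1$) or in the ring itself. Your first case is thus slightly more general than the paper's, which is why you need the extra step of bounding $r_1$ by $\diam E$ and passing to a limit $r\to r_1^-$ --- in the paper's first case $\diam E=\infty$ automatically, so one applies uniform perfectness directly with radius $r_1$.

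Second, and more substantively, the two proofs handle the remaining case by different inversions and different auxiliary facts. You invert at a point $y_1\in C_1\cap E$ so that $\infty$ lands in the image of $C_1$, and then invoke the previous case for $\psi(E)$; this requires the quasi-invariance of Euclidean uniform perfectness under inversion about a point of the set, which is indeed standard but lies outside the paper. The paper instead inverts at a point $a\in C_0\cap E$, applies Theorem~\ref{thm:teich} in the inverted picture, and then uses the purpose-built Lemma~\ref{lem:ann} (on how annuli transform under inversion) to transport the resulting annulus back to the original coordinates, where uniform perfectness of $E$ itself yields the contradiction. Your approach is more conceptual and avoids the computational Lemma~\ref{lem:ann}, at the cost of importing the M\"obius quasi-invariance of uniform perfectness; the paper's approach is fully self-contained and gives the explicit bound $M\le A_n+\log(3/c)$.
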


As we will see in the proof later, condition \eqref{eq:UP} implies $M\le A_n+\log(3/c),$
where $A_n$ is given in \S \ref{ssec:An}.
As a preparation of the proof, we first show the following lemma.

\begin{lem}\label{lem:ann}
Let $\A= \{x: r_0<|x-a|<r_1\}$ be an annular ring in $\R^n$ separating the origin from $\infty$ with $\mo\A>\log3.$
Then $\A'=I(\A)$ contains an annular subring $\A_0$ with $\mo\A_0\ge\mo\A-\log3,$
where $I$ is the reflection in the unit sphere: $I(x)=x/|x|^2.$
Moreover, $a'=I(a)$ $($the origin $0)$ can be chosen to be the center of $\A_0$
as the unbounded component of $\Sn\setminus\A$ $($the bounded component of $\Sn\setminus\A$,
respectively$)$ contains the origin.
\end{lem}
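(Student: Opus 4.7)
The plan is to exploit the fact that the inversion $I$ preserves the family of annular rings centered at the origin together with the conformal modulus: explicitly, $I(\{R<|x|<R'\})=\{1/R'<|y|<1/R\}$, and this image has modulus $\log(R'/R)$. Consequently, finding an annular subring of $\A'=I(\A)$ centered at $0$ with modulus at least $\mo\A-\log 3$ is equivalent to finding such a subring of $\A$ itself, and $\A_0$ may then be taken as the $I$-image of the latter.

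Under the hypothesis that $\A$ separates $0$ from $\infty$, the origin lies in the bounded component $\overline B(a,r_0)$ of $\Sn\setminus\A$, so $|a|\le r_0$. I would then observe that, for $t>0$, the sphere $\{|x|=t\}$ is contained in $\A$ if and only if $r_0<|t-|a||$ and $t+|a|<r_1$; the assumption $|a|\le r_0$ rules out the alternative $t<|a|-r_0$, so the condition becomes $r_0+|a|<t<r_1-|a|$. The largest annular ring centered at the origin that is contained in $\A$ is therefore
\[
\A_\ast=\{x\in\R^n\,:\,r_0+|a|<|x|<r_1-|a|\},\qquad
\mo\A_\ast=\log\frac{r_1-|a|}{r_0+|a|}.
\]

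It remains to verify $\log\frac{r_1-|a|}{r_0+|a|}\ge\log\frac{r_1}{r_0}-\log 3$, which, after cross-multiplication, is equivalent to $|a|(3r_0+r_1)\le 2r_0r_1$. Since $\mo\A>\log 3$ gives $r_1>3r_0$, one has $3r_0+r_1\le 2r_1$; combining this with $|a|\le r_0$ yields $|a|(3r_0+r_1)\le r_0\cdot 2r_1=2r_0r_1$, as needed. Setting $\A_0:=I(\A_\ast)$ then gives the desired annular subring of $\A'$ centered at the origin, matching the ``respectively'' case of the statement. The proof is essentially a one-variable radial bookkeeping exercise; the only conceptual point is selecting the correct center (here $0$ rather than $I(a)$) based on which component of $\Sn\setminus\A$ contains the origin, after which the calculation reduces to the short inequality above.
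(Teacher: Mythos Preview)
Your argument is correct. You rightly observe that the separation hypothesis forces $|a|\le r_0$, so only the ``respectively'' case of the Moreover clause is in play; under that assumption your annulus $\A_0=I(\A_*)=\{1/(r_1-|a|)<|y|<1/(r_0+|a|)\}$ coincides exactly with the one the paper constructs in its second case. The paper reaches it by bounding $|I(x)|$ directly for $x$ on the boundary spheres of $\A$ via the triangle inequality, whereas you first pass to a $0$-centred annulus $\A_*\subset\A$ and then invoke the trivial action of $I$ on concentric annuli --- a slightly cleaner packaging of the same estimate. (The paper's intermediate lower bound $(e^m-1)/2$ for the radius ratio is marginally sharper than your $e^m/3$ when $m>\log 3$, but both yield $\mo\A_0\ge m-\log 3$.)

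The one point to be aware of is that the paper also carries out the case $|a|\ge r_1$, producing an annulus centred at $I(a)$; this is what the first alternative of the Moreover clause refers to, and it requires a separate computation of $|I(x)-I(a)|$ that your pre-shrink trick does not cover. As you note, that case cannot arise when $\A$ actually separates $0$ from $\infty$, so your proof is complete for the lemma as literally stated; the paper's proof is simply more general than its own hypothesis requires.
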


\begin{proof}
We first consider the case when $0\in C_1;$ equivalently, $|a|\ge r_1.$
Suppose that $x\in\R^n$ is on the sphere $|x-a|=r$ with $|a|>r.$
Let $u=x-a,~ x'=I(x)$ and $a'=I(a).$
Then $|u|=r$ and
\begin{align*}
|x'-a'|^2&=|x'|^2-2\,x'\cdot a'+|a'|^2 \\
&=\frac1{|x|^2}-2\frac{x\cdot a}{|x|^2|a|^2}+\frac1{|a|^2} \\
&=-\frac1{|x|^2}-2\frac{u\cdot a}{|x|^2|a|^2}+\frac1{|a|^2}.
\end{align*}
Letting $u\cdot a=t|a|^2$ with $|t|\le r/|a|,$ we obtain
$$
|x'-a'|^2 =\frac1{|a|^2}-\frac1{|x|^2}-\frac{2t}{|x|^2}
=\frac1{|a|^2}-\frac{1+2t}{|a|^2(1+2t)+r^2}=: h(t).
$$
Since $h(t)$ is decreasing in $t,$ we have the double inequality
$$
\frac{r^2}{|a|^2(|a|+r)^2}=
h(r/|a|)\le |x'-a'|^2\le h(-r/|a|)=\frac{r^2}{|a|^2(|a|-r)^2},
$$
which is equivalent to
$$
\frac{r}{|a|(|a|+r)}\le |x'-a'| \le \frac{r}{|a|(|a|-r)}.
$$
In view of the above estimates, we get the inclusion relation $\A(a'; R_0, R_1)\subset \A',$
where
$$
R_0=\frac{r_0}{|a|(|a|-r_0)}
\aand
R_1=\frac{r_1}{|a|(|a|+r_1)}.
$$
Put $m=\mo\A$ so that $r_1=e^mr_0.$
Since $r_1\le|a|,$ the range of $r_0$ is $0<r_0\le e^{-m}|a|.$
Hence
$$
\frac{R_1}{R_0}
=\frac{r_1(|a|-r_0)}{r_0(|a|+e^mr_0)}
=\frac{e^m(|a|-r_0)}{|a|+e^mr_0}
\ge\frac{e^m(|a|-e^{-m}|a|)}{|a|+|a|}
=\frac{e^{m}-1}2.
$$
By the condition $m>\log 3,$ we see that the right-most term is greater than 1.
Hence, $\A_0=\A(a'; R_0, R_1)$ is an annular subring of $\A'$ with
$$
\mo\A_0=\log\frac{R_1}{R_0}\ge\log\frac{e^{m}-1}2
=m+\log\frac{1-e^{-m}}2
\ge m-\log 3.
$$
Next we consider the case when $0\in C_0;$ namely, $|a|\le r_0.$
For a point $x$ on the sphere $|x-a|=r$ with $r>|a|,$ we denote by $x'$ the inversion $I(x).$
Since $|x'|=1/|x|,$ by the triangle inequality, we have
$$
\frac1{r+|a|}\le |x'|\le \frac1{r-|a|}.
$$
Thus the ring $\A'=I(\A)$ contains the annular ring $\A(0; R_0,R_1)$ as a subring,
where
$$
R_0=\frac1{r_1-|a|}\aand R_1=\frac1{r_0+|a|}.
$$
With the relation $r_1=e^m r_0,$ we estimate
$$
\frac{R_1}{R_0}=\frac{e^mr_0-|a|}{r_0+|a|}
\ge \frac{e^m|a|-|a|}{|a|+|a|}=\frac{e^m-1}2.
$$
Thus, as in the previous case, we see that $\A_0=\A(0; R_0,R_1)$ is a subring of $\A'$
with $\mo\A_0\ge \mo\A-\log 3.$
\end{proof}

\subsection{Proof of Theorem \ref{thm:sep0}}
Suppose that $E$ satisfies \eqref{eq:UP} for a constant $c\in(0,1).$
We first assume that $\infty\in E$ so that $\diam E=\infty.$
Let $\ring=\ring(C_0,C_1)$ be a ring separating $E$ with $\mo\ring\ge A_n-\log c.$
Choose a point $a$ from the bounded component $C_0.$
Then $\ring$ separates $a$ from $\infty$ thus Theorem \ref{thm:teich} implies
that there exists an annular subring $\A=\A(a; r_0, r_1)$ of $\ring$ with
$\mo\A=\log(r_1/r_0)\ge\mo\ring-A_n.$
Since $r_0\le cr_1$ by assumption, we have
$$
\{x\in E: cr_1< |x-a|<r_1\}~\subset~ E\cap\A
~\subset~ E\cap\ring=\varnothing,
$$
which contradicts \eqref{eq:UP}.
Hence we conclude that $\mo\ring\le A_n-\log c$ for a ring $\ring$ separating $E$ when $\infty\in E.$
We next assume that $E\subset\R^n.$
Let $\ring=\ring(C_0,C_1)$ be a ring separating $E$ with $\mo\ring\ge A_n+\log(3/c)$
and choose a point $a$ from the bounded component $C_0.$
Set $\ring'=I_a(\ring)\subset\R^n,$ where $I_a(x)=(x-a)/|x-a|^2+a.$
Then, by Theorem \ref{thm:teich}, $\ring'$ contains an annular subring $\A$ centered at $a$
with $\mo\A\ge \mo\ring'-A_n=\mo\ring-A_n.$
By Lemma \ref{lem:ann}, we find an annular subring $\A_0=\A(a; r_0, r_1)$ of $I_a(\A')$
centered at $a$ such that $\mo\A_0\ge \A'-\log 3\ge -\log c.$
We now see that $\{x\in E: cr_1< |x-a|<r_1\}\subset E\cap\A_0
\subset E\cap\ring=\varnothing$ as in the first case.
Since $a\in E$ and $\A_0$ separates $E,$ we have $\diam E\ge r_1.$
This contradicts \eqref{eq:UP}.

Conversely, we suppose that there exists a constant $M>0$ such that
$\mo\ring\le M$ whenever a ring $\ring$ separates $E.$
We show that \eqref{eq:UP} is valid for $c=\min\{e^{-M}, 1/2\}.$
Indeed, to the contrary, we assume that $\{x\in E: cr< |x-a|<r\}$
is empty for some $a\in E,~ a\ne\infty,$ and $0<r<\diam E.$
If the annular ring $\A=\A(a; cr,r)$ separates $E,$ we would have
$\mo\A=\log(1/c)\le M$ by assumption.
However, this is impossible by the choice of $c.$
Thus $\A$ cannot separate $E.$
This implies that $E$ is contained in the bounded component $|x-a|\le cr$
and, in particular, $\diam E\le 2cr\le r,$ which contradicts the choice of $r.$
Now the proof is complete.\QED

\subsection{Another consequence of Theorem \ref{thm:teich}}
Fix a number $B$ so that $B>A_n,$ where $A_n$ is given in \S \ref{ssec:An}.
Let $\ring=\ring(C_0,C_1)$ be a ring in $\R^n$ with $m=\mo \ring\ge B~(>A_n).$
By Theorem \ref{thm:teich}, there is an annular subring $\A=\A(a; r_0,r_1)$  of $\ring$
with $\mo\A\ge\mo\ring-A_n.$
Then we easily get $\diam C_0\le 2r_0$ and $\dist(C_0,C_1)\ge r_1-r_0.$
Here and hereafter, $\dist(C_0,C_1)=\inf\{|x_0-x_1|: x_0\in C_0, x_1\in C_1\}$
denotes the Euclidean distance between $C_0$ and $C_1.$
Since $r_1/r_0=e^{\mo\A}\ge e^{m-A_n},$ we get
$$
r_1-r_0=r_0\big(e^{m-A_n}-1\big)\ge r_0e^m\big(e^{-A_n}-e^{-B}\big).
$$
These observations yield the following corollary.

\begin{cor}\label{cor:C}
Let $B>A_n$ and $\ring=\ring(C_0,C_1)$ be a ring in $\R^n$ with $\infty\in C_1$
and $\mo\ring\ge B.$
Then
\begin{equation*}
\diam C_0\le Me^{-\mo\ring}\dist(C_0,C_1),
\end{equation*}
where $M$ is the constant $2/(e^{-A_n}-e^{-B}).$
\end{cor}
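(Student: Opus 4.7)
The plan is to feed the modulus bound $\mo\ring\ge B$ into Theorem \ref{thm:teich} and extract a single annular subring whose two radii simultaneously control $\diam C_0$ from above and $\dist(C_0,C_1)$ from below. Fix an arbitrary point $a\in C_0$. Since $\ring$ separates $a$ from $\infty\in C_1$ and $\mo\ring\ge B>A_n$, Theorem \ref{thm:teich} produces an annular subring $\A=\A(a;r_0,r_1)$ of $\ring$ centered at $a$ satisfying
$$
\log\frac{r_1}{r_0}=\mo\A\ge\mo\ring-A_n.
$$

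Next I would read off the geometric content of $\A$ being a subring. The continua $C_0$ and $C_1$ are disjoint from the open annulus $\A$, and each is connected, so each lies entirely in one of the two components of $\Sn\setminus\A$. Because the center $a\in C_0$ belongs to the closed ball $\{|x-a|\le r_0\}$ while $\infty\in C_1$ belongs to the unbounded component, we obtain $C_0\subset\{|x-a|\le r_0\}$ and $C_1\subset\{|x-a|\ge r_1\}\cup\{\infty\}$. This immediately yields $\diam C_0\le 2r_0$ and, by the triangle inequality, $\dist(C_0,C_1)\ge r_1-r_0$.

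The last step is a routine computation. Writing $m=\mo\ring$ and using $r_1/r_0\ge e^{m-A_n}$,
$$
\frac{\diam C_0}{\dist(C_0,C_1)}\le\frac{2r_0}{r_1-r_0}\le\frac{2}{e^{m-A_n}-1}=\frac{2e^{-m}}{e^{-A_n}-e^{-m}}.
$$
Since $m\ge B$ we have $e^{-m}\le e^{-B}$, hence $e^{-A_n}-e^{-m}\ge e^{-A_n}-e^{-B}$, and this last quantity is strictly positive thanks to $B>A_n$. Consequently the right-hand side is at most $Me^{-m}$ with $M=2/(e^{-A_n}-e^{-B})$, which is exactly the claimed inequality $\diam C_0\le Me^{-\mo\ring}\dist(C_0,C_1)$.

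There is no real obstacle here: Theorem \ref{thm:teich} carries all the analytic weight, and the remainder is elementary geometry and one arithmetic monotonicity step. The only point that must be handled carefully is the component assignment for $C_0$ and $C_1$ in $\Sn\setminus\A$, which is forced once the center $a$ is chosen inside $C_0$ and we recall that $\infty\in C_1$; the role of the auxiliary parameter $B$ is simply to pin down a uniform lower bound for $e^{-A_n}-e^{-m}$, so that $M$ does not depend on the particular ring $\ring$.
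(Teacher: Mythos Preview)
Your proof is correct and follows essentially the same route as the paper: apply Theorem \ref{thm:teich} at a point of $C_0$ to obtain an annular subring $\A(a;r_0,r_1)$, read off $\diam C_0\le 2r_0$ and $\dist(C_0,C_1)\ge r_1-r_0$, and then use $r_1/r_0\ge e^{m-A_n}$ together with $m\ge B$ to arrive at the constant $M=2/(e^{-A_n}-e^{-B})$. The only difference is cosmetic: you spell out the component assignment for $C_0$ and $C_1$ explicitly, whereas the paper states those two inequalities without further comment.
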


We remark that a similar result was obtained in \cite{GMSV05} for the planar case.

\section{Boundary correspondence}

In this section, we consider the problem  when a given
homeomorphism $f$ of the unit ball $\B^n$ onto itself extends to the boundary homeomorphically.
Gutlyanski\u\i, Sakan and the second author formulated in \cite{GSS13} a necessary and sufficient
condition for such an $f$ to extend homeomorphically to the boundary
in terms of the moduli of semiannuli in the case when $n=2.$
We extend it to higher dimensional cases.
Note that some results below are straightforward extensions of the two-dimensional case in \cite{GSS13}
but the proofs need some more efforts because conformal mappings in higher dimensions are only
M\"obius transformations.

\subsection{Semirings}
Our standard model for ``semiring" is the upper half of the {\it closed} ring
$$
\T_R=\{x\in\Hn: 1\le |x|\le R\}
$$
for $1<R<+\infty.$
Here $\Hn$ denotes the upper half space $\{x=(x_1,\dots,x_n): x_n>0\}.$
The semiring $\T_R$ has the distinguished boundary components
$\partial_0\T_R=\{x\in\Hn: |x|=1\}$ and
$\partial_1\T_R=\{x\in\Hn: |x|=R\}$ relative to  $\Hn\,,$ which are homeomorphic to
the $(n-1)$-dimensional open ball $\B^{n-1}.$
Let $\Gamma(R)$ denote the family of arcs $\gamma:[0,1]\to \T_R$
joining $\partial_0\T_R$ and $\partial_1\T_R$ in $\T_R.$
Thanks to \cite[7.7]{Vai71}, we obtain the formula
\begin{equation}\label{eq:modulus}
\M(\Gamma(R))=\frac{\omega_{n-1}}{2}\left(\log R\right)^{1-n}.
\end{equation}
A subset $\es$ of $\Sn$ is called a semiring if it is homeomorphic to $\T_R$ for some $R>1.$
We denote by $\Gamma_\es$ the family of the image curves of $\Gamma(R)$
under a homeomorphism $f:\T_R\to \es.$
Note that $\Gamma_\es$ does not depend on the particular choice of $f$ and $R.$
We define the modulus of the semiring $\es$ by
$$
\mo \es=\left[\frac{\omega_{n-1}}{2\M(\Gamma_\es)}\right]^{1/(n-1)}.
$$
We have the formula $\mo\T_R=\log R$ by virtue of \eqref{eq:modulus}.
Let $G$ be a proper subdomain of $\Sn.$
A semiring $\es$ in $G$ is said to be {\it properly embedded} in $G$ if
$\es\cap C$ is compact whenever $C$ is a compact subset of $G.$
That is to say, $\es$ is a properly embedded semiring in $G$ if and only if
for some (and thus for every) homeomorphism $f:\T_R\to S$ is proper as considered to be
a map $f:\T_R\to G.$
Note that $\partial_0\es=f(\partial_0\T_R)$ and $\partial_1\es=f(\partial_1\T_R)$
are properly embedded $(n-1)$-balls in $G$ and constitute connected components
of $\partial \es\cap G.$
(Though there is no canonical way to label $\partial_0\es$ and $\partial_1\es$
to the connected components of $\partial\es$ in $G,$ we take the labels given by
a proper embedding $f:\T_R\to G$ and fix them for convenience.)

From now on, we consider a semiring properly embedded in $\B^n$ by a mapping
$f:\T_R\to\es\subset\B^n.$
Then $\B^n\setminus\es$ is an open subset of $\B^n$ consisting of two components $V_0$ and $V_1$
for which $V_0\cap\partial_1\es=\emptyset$ and $V_1\cap\partial_0\es=\emptyset.$

Our main tool is the following separation lemma.
The planar case was given in \cite{GSS13}.

\begin{lem}\label{lem:sep}
Let $\es$ be a properly embedded semiring in $\B^n.$
Then $\mo \es>0$ if and only if the Euclidean distance $\delta=\dist(V_0, V_1)$ between
$V_0$ and $V_1$ is positive.
Moreover, in this case, the double $\hat\es:=\Int\es\cup U \cup \Int\es^*$ of $\es$
is a ring with
$\mo\hat\es=\mo\es,$ where $\es^*$ is the reflection of $\es$ in $\partial\B^n$
and $U=\partial\B^n\setminus(\overline V_0\cup\overline V_1).$
\end{lem}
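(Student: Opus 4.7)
The plan is to derive both assertions from a single reflection identity for moduli, namely $\M(\Gamma_{\hat\es})=2\M(\Gamma_\es)$, from which $\mo\hat\es=\mo\es$ follows immediately by the definitions of ring and semiring modulus. To set this up, write $j(x)=x/|x|^2$ for the M\"obius inversion in $\partial\B^n$, and put $\es^*=j(\es)$, $V_i^*=j(V_i)$, and $C_i=\overline{V_i\cup V_i^*}$ in $\Sn$ for $i=0,1$. Since $V_0$ and $V_1$ are open subsets of $\B^n$ separated by the closed set $\es$ with $\partial_0\es$ and $\partial_1\es$ disjoint components of $\partial\es\cap\B^n$, the closures $\overline V_0$ and $\overline V_1$ can meet only on $\partial\B^n$, and by $j$-invariance of $\partial\B^n$ we have $\overline{V_i^*}\cap\partial\B^n=\overline V_i\cap\partial\B^n$. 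Proper embeddedness forces each $V_i$ to accumulate on $\partial\B^n$, so each $C_i$ is a non-degenerate continuum in $\Sn$, and a direct check yields the identification $\hat\es=\Sn\setminus(C_0\cup C_1)$. In particular, $C_0\cap C_1=\emptyset$ iff $\overline V_0\cap\overline V_1=\emptyset$ iff $\delta>0$, the last equivalence using compactness of the closures in $\overline{\B^n}$.

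For the reflection identity, the lower bound $\M(\Gamma_{\hat\es})\ge 2\M(\Gamma_\es)$ follows from the subfamily inclusion $\Gamma_\es\cup\Gamma_{\es^*}\subset\Gamma_{\hat\es}$ together with additivity $\M(\Gamma_\es\cup\Gamma_{\es^*})=\M(\Gamma_\es)+\M(\Gamma_{\es^*})$ (justified by the disjointness of the supports $\Int\es$ and $\Int\es^*$) and the conformal invariance $\M(\Gamma_{\es^*})=\M(\Gamma_\es)$ under the M\"obius map $j$. For the upper bound, given $\rho$ admissible for $\Gamma_\es$, I define the symmetric extension $\tilde\rho=\rho$ on $\es$ and $\tilde\rho(x)=\rho(j(x))|J_j(x)|^{1/n}$ on $\es^*$; a change of variables then gives $\int\tilde\rho^n\,dm=2\int\rho^n\,dm$. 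Admissibility of $\tilde\rho$ for $\Gamma_{\hat\es}$ will be verified by folding: for $\gamma\in\Gamma_{\hat\es}$, the map $\phi$ equal to the identity on $\es$ and to $j$ on $\es^*$ is continuous on $\hat\es$ (since $j$ fixes $U$ pointwise), so $\phi\circ\gamma$ is a locally rectifiable path in $\Int\es\cup U$ joining a point of $\overline V_0\cap\overline\es$ to one of $\overline V_1\cap\overline\es$, and the $\rho$-length of $\phi\circ\gamma$ equals the $\tilde\rho$-length of $\gamma$.

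Combining these pieces, the identity yields $\mo\hat\es=\mo\es$. When $\delta>0$, $C_0$ and $C_1$ are disjoint non-degenerate continua, so $\hat\es$ is a genuine ring with $0<\mo\hat\es<\infty$, hence $\mo\es>0$. When $\delta=0$, $C_0\cap C_1\neq\emptyset$ contains a point of $\partial\B^n$, and the standard fact that the modulus of a curve family joining two touching continua is infinite forces $\M(\Gamma_{\hat\es})=\infty$, whence $\M(\Gamma_\es)=\infty$ and $\mo\es=0$. The hardest step will be the folding argument for admissibility: a generic $\gamma\in\Gamma_{\hat\es}$ may cross $\partial\B^n$ infinitely often and its endpoints on $C_i$ may lie on $\overline V_i\cap\partial\B^n\setminus\partial_i\es$, so $\phi\circ\gamma$ is not literally in $\Gamma_\es$; a careful truncation, combined with a small perturbation lifting segments of $\phi\circ\gamma$ off $\partial\B^n$ into $\Int\es$, is needed to deduce $\int_\gamma\tilde\rho\,|dx|\ge 1$ from the admissibility of $\rho$.
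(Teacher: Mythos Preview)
Your route is genuinely different from the paper's. The paper treats the two assertions separately and with minimal machinery: for $\delta>0$ it simply exhibits the admissible function $\rho_0=\chi_{\B^n}/\delta$ to get $\M(\Gamma_\es)\le \mathrm{Vol}(\B^n)/\delta^n<\infty$, and for $\delta=0$ it applies V\"ais\"al\"a's Theorem~10.12 directly to $\Gamma_\es$ (small spheres about a point of $\overline V_0\cap\overline V_1\subset\partial\B^n$ meet both $V_i$); the identity $\mo\hat\es=\mo\es$ is then obtained by a one-line citation of the symmetry principle for moduli of curve families (Theorem~4.3.3 in \cite{GMP17}). You instead derive everything from the reflection identity $\M(\Gamma_{\hat\es})=2\M(\Gamma_\es)$, which you propose to prove from scratch. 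This is more self-contained---you recover the cited symmetry principle as a byproduct---but it concentrates all the work into the folding step, and that step is load-bearing for \emph{both} conclusions: in your $\delta=0$ argument you deduce $\M(\Gamma_\es)=\infty$ from $\M(\Gamma_{\hat\es})=\infty$, which requires precisely the upper bound $\M(\Gamma_{\hat\es})\le 2\M(\Gamma_\es)$ coming from the folding, not the easy subfamily inequality. The difficulties you flag are real: $\phi\circ\gamma$ may lie partly on $U\subset\partial\B^n$, may fail to meet $\partial_i\es$, and curves contained entirely in $U$ must be discarded as a null family before truncation works. None of this is wrong, but it reproduces the proof of a standard theorem that the paper is content to quote; note too that the ``standard fact'' you invoke for $\Gamma_{\hat\es}$ when $\delta=0$ applies equally well to $\Gamma_\es$ itself, so the detour through $\hat\es$ is not actually needed for the equivalence.
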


\begin{proof}
We recall that $\mo\es>0$ if and only if $\M(\Gamma_\es)<+\infty.$
Assume first that $\delta>0.$
In this case, the function $\rho_0=\chi_{\B^n}/\delta$ is admissible for the curve family
$\Gamma_\es.$
Therefore, we have
$$
\M(\Gamma_\es)\le \int \rho_0^n\,dm=\frac{{\rm Vol}(\B^n)}{\delta^n}<+\infty.
$$
Assume next that $\delta=0.$
Then there is a point $x_0$ in the set $\overline V_0\cap\overline V_1~(\subset\partial\B^n).$
Since $V_0$ and $V_1$ are both continua, the sphere $|x-x_0|=t$ intersects
both of $V_0$ and $V_1$ for small enough $t>0.$
Therefore, Theorem~10.12 in V\"ais\"al\"a \cite{Vai71} implies that
$\M(\Gamma_\es)=+\infty.$

Suppose $\mod\es>0.$
Then $\delta>0$ and $\hat V_j=\overline V_j\cup V_j^*~(j=1,2)$ are disjoint continua.
Obviously, $\hat\es=\Sn\setminus(\hat V_0\cup\hat V_1)$ and thus $\hat\es$ is a ring.
The equality $\mo\hat\es=\mo\es$ follows from the symmetry principle for the moduli
of curve families (see Theorem 4.3.3 or its corollary in \cite{GMP17}).
\end{proof}

\subsection{Canonical semirings in $\B^n$}
For a point $\xi\in\partial\B^n$ and real numbers $0<r_0<r_1<+\infty,$
we consider the properly embedded semiring
$$
\T(\xi; r_0, r_1)=\left\{x\in\B^n: r_0\le \frac{|x-\xi|}{|x+\xi|}\le r_1\right\}
$$
in $\B^n.$

\begin{lem}\label{lem:T}
$\displaystyle
\mo\T(\xi; r_0, r_1)=\log\frac{r_1}{r_0}.
$
\end{lem}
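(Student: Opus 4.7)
The plan is to realize $\T(\xi; r_0, r_1)$ as the image of a scaled standard semiring in $\Hn$ under an explicit M\"obius transformation, and then invoke M\"obius invariance of the conformal modulus of a curve family.

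The first step is to introduce the inversion $\Psi$ in the sphere centered at $-\xi$ with radius $\sqrt 2$ (using $|\xi|=1$). The standard inversion-distance formula gives
$$|\Psi(x)-\Psi(y)| \;=\; \frac{2\,|x-y|}{|x+\xi|\,|y+\xi|},$$
and a direct computation shows $\Psi(\xi)=0$. Specializing $y=\xi$ yields the key identity
$$|\Psi(x)| \;=\; \frac{|x-\xi|}{|x+\xi|}.$$
The next step is to verify that $\Psi$ maps $\B^n$ conformally onto the half-space $\{x:x\cdot\xi>0\}$. Since $\partial\B^n$ passes through the inversion center $-\xi$, its image is a hyperplane; since $\Psi(\xi)=0$ and the tangent plane to $\partial\B^n$ at $-\xi$ is $\xi^{\perp}$, this image hyperplane must be $\xi^{\perp}$ itself, and the one-line check $\Psi(0)=\xi$ confirms that $\B^n$ is mapped to the correct side. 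After a rotation sending $\xi$ to $e_n$, this half-space becomes $\Hn$.

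Combining these facts, $\Psi$ sends $\T(\xi;r_0,r_1)$ onto $\{y\in\Hn:r_0\le|y|\le r_1\}$, with the two distinguished boundary components $\partial_j\T(\xi;r_0,r_1)$ (which are the level sets $|\Psi(x)|=r_j$ inside $\B^n$) mapping onto $\{y\in\Hn:|y|=r_j\}$ for $j=0,1$. The similarity $y\mapsto y/r_0$ then sends this image onto the standard semiring $\T_{r_1/r_0}$ together with its distinguished boundary components. Since both $\Psi$ and the similarity are M\"obius transformations of $\Sn$, they preserve the conformal modulus of curve families, so $\M(\Gamma_{\T(\xi;r_0,r_1)})=\M(\Gamma(r_1/r_0))$. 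The definition of $\mo\es$ together with \eqref{eq:modulus} then yields $\mo\T(\xi;r_0,r_1)=\log(r_1/r_0)$.

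No single step presents a serious difficulty. The main conceptual point to verify, and thus the most delicate step, is that $\Psi$ identifies the distinguished boundary components of $\T(\xi;r_0,r_1)$ inside $\B^n$ with those of the image semiring inside $\Hn$, so that the curve families $\Gamma_{\T(\xi;r_0,r_1)}$ and $\Gamma(r_1/r_0)$ truly correspond under $\Psi$ composed with the scaling. This is immediate from the level-set description furnished by $|\Psi(x)|=|x-\xi|/|x+\xi|$.
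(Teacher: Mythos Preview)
Your proof is correct and follows essentially the same route as the paper: construct an explicit M\"obius map from $\B^n$ to $\Hn$ via an inversion in a sphere of radius $\sqrt 2$, verify the key identity relating the modulus of the image to the ratio $|x-\xi|/|x+\xi|$, and invoke conformal invariance together with \eqref{eq:modulus}. The only cosmetic differences are that the paper rotates first and inverts at (the image of) $\xi$ rather than at $-\xi$, which produces the reciprocal identity $|M(x)|=|x+\xi|/|x-\xi|$ and hence the target semiannulus $\{1/r_1\le|y|\le 1/r_0\}$; your choice of inversion center $-\xi$ lands directly on the correct half-space without the extra reflection $P$ the paper uses.
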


\begin{proof}
Let $Q:\Sn\to\Sn$ be the reflection in the sphere
$x_1^2+\cdots+x_{n-1}^2+(x_n-1)^2=2,$ in other words,
$$
Q(x)=2\frac{x-e_n}{|x-e_n|^2}+e_n,
$$
where $e_n=(0,\dots, 0, 1)\in\R^n,$ and let $P$ be the reflection
in the hyperplane $x_n=0,$ namely, $P(x)=x-2(x\cdot e_n)e_n.$
Then the M\"obius transformation $P\circ Q$ is known as
the stereographic projection which maps $\B^n$ onto $\Hn$
and $\partial\B^n\setminus\{e_n\}$ onto $\partial\Hn\setminus\{\infty\}
=\R^{n-1}\times\{0\},$ respectively.
Choose a rotation $R:\R^n\to\R^n$ about the origin so that
$R(\xi)=e_n$ and $R(-\xi)=-e_n$ and set $M=M_\xi=P\circ Q\circ R:\B^n\to\Hn.$
Put $y=R(x)$ and $z=Q(y)$ for $x\in\B^n.$
Then $|x-\xi|=|y-e_n|$ and $|x+\xi|=|y+e_n|.$
Moreover, since $|y-e_n|^2z=2(y-e_n)+|y-e_n|^2e_n,$ we compute
\begin{align*}
|y-e_n|^4|z|^2&=4|y-e_n|^2+4|y-e_n|^2(y-e_n)\cdot e_n+|y-e_n|^4 \\
&=|y-e_n|^2\big[4+4y\cdot e_n-4+|y|^2-2y\cdot e_n+1\big] \\
&=|y-e_n|^2|y+e_n|^2
\end{align*}
and thus
$$
|M(x)|=|z|=\frac{|y+e_n|}{|y-e_n|}=\frac{|x+\xi|}{|x-\xi|}.
$$
In this way, we see that the M\"obius transformation $M$ maps the set
$\T(\xi;r_0,r_1)$ onto the semiannulus $\{x\in\Hn: 1/r_1\le |x|\le 1/r_0\},$
whose modulus is equal to $\log(r_1/r_0)$.
Since the modulus is conformally invariant, the required formula follows.
\end{proof}

The unit ball $\B^n$ carries the hyperbolic distance $h(x_1,x_2)$
induced by the hyperbolic metric $2|dx|/(1-|x|^2)$
so that we may develop hyperbolic geometry on $\B^n.$
See \cite{Be} for details.

The following lemma was shown in \cite{GSS13} for the 2-dimensional case.

\begin{lem}\label{lem:diam}
Let $\T$ be a properly embedded semiannulus in $\B^n$ whose boundary in $\B^n$
consists of two hyperbolic hyperplanes and let $W_0$ and $W_1$ be the
connected components of $\B^n\setminus\T.$ Then the Euclidean diameters of
$W_0$ and $W_1$ satisfy the inequality
$$
\min\{\diam W_0, \diam W_1\}\le\frac 2{\cosh(\frac12\mo \T)}.
$$
Equality holds if and only if $\T$ is of the form $\T(\xi; r,1/r)$
for some $\xi\in\partial\B^n$ and $0<r<1.$
\end{lem}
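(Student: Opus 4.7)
The plan is to reduce the Euclidean diameter of each component $W_j$ to a simple hyperbolic quantity, namely the hyperbolic distance $\delta_j := d_h(0, H_j)$ from the center of $\B^n$ to the bounding hyperplane $H_j$, where $d_h$ denotes the hyperbolic distance in $\B^n$. Once this reduction is in place, the stated inequality will follow from the triangle inequality for $d_h$ together with the identity $\mo\T = d_h(H_0,H_1)$.

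The first step is to establish the formula
\begin{equation*}
\diam W_j = \frac{2}{\cosh \delta_j}
\end{equation*}
whenever $W_j$ is the \emph{small cap} bounded by $H_j$ (the component of $\B^n\setminus H_j$ that does not contain $0$). Any hyperbolic hyperplane $H\subset\B^n$ is either a Euclidean hyperplane through $0$ (in which case the formula degenerates to $\diam W = 2$ and $\delta = 0$), or a Euclidean sphere $\{|x-c|=\rho\}$ orthogonal to $\partial\B^n$, forcing $|c|^2 = 1+\rho^2$. After rotating so that $c = |c|\xi$ for some $\xi\in\partial\B^n$, the small cap $\{x\in\B^n:|x-c|<\rho\}$ is convex (an intersection of two Euclidean balls), so its Euclidean diameter is attained on the seam $\partial\B^n\cap H = \{|x|=1,\,x\cdot\xi = 1/|c|\}$, an $(n-2)$-sphere of Euclidean diameter $2\sqrt{1-1/|c|^2} = 2\rho/|c|$. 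Setting $t := |c|-\rho = \tanh(\delta_j/2)$ and exploiting $|c|^2-\rho^2 = 1$, a short computation yields $|c| = \coth \delta_j$, $\rho = 1/\sinh \delta_j$, hence $2\rho/|c| = 2/\cosh \delta_j$.

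Next I verify $\mo\T = d_h(H_0,H_1)$. Both quantities are invariant under M\"obius self-maps of $\B^n$, so it suffices to check the canonical case $\T(\xi;r_0,r_1)$: Lemma~\ref{lem:T} gives modulus $\log(r_1/r_0)$, while an arc-length computation along the $\xi$-diameter, which is the common perpendicular of the two bounding spheres, gives the same value for the hyperbolic distance.

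With $d := \mo\T = d_h(H_0,H_1)$, the triangle inequality yields $\delta_0+\delta_1 \ge d$. If $0\in\T$, both $W_0, W_1$ are small caps on opposite sides of $0$, and
\begin{equation*}
\min\{\diam W_0,\diam W_1\} = \frac{2}{\cosh(\max\{\delta_0,\delta_1\})} \le \frac{2}{\cosh(d/2)},
\end{equation*}
since $\max\{\delta_0,\delta_1\}\ge(\delta_0+\delta_1)/2\ge d/2$. If instead $0\in W_i$ for some $i$, then $H_i$ separates $0$ from $H_{1-i}$, forcing $\delta_{1-i}\ge\delta_i+d\ge d$; since $W_{1-i}$ is then a small cap, $\diam W_{1-i}\le 2/\cosh d < 2/\cosh(d/2)$, and the inequality holds strictly. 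Equality therefore requires the first case with $\delta_0 = \delta_1 = d/2$ together with equality in the triangle inequality, i.e., $0$ lies on the geodesic segment between $H_0$ and $H_1$. That segment is the unique common perpendicular of $H_0, H_1$; passing through $0$, it must be a Euclidean diameter of $\B^n$ with endpoints $\pm\xi\in\partial\B^n$, and $0$ its hyperbolic midpoint, which pins down $\T = \T(\xi; r, 1/r)$ with $r = e^{-d/2}$. The main technical obstacle is the lens-diameter formula; once that is in hand, the rest is essentially the triangle inequality.
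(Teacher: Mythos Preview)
Your overall strategy is correct and more self-contained than the paper's, which slices by the $2$-plane $\Pi$ through the common perpendicular of $H_0,H_1$ and the origin, notes that $\diam(W_j\cap\Pi)=\diam W_j$, and then defers the planar inequality to \cite[Lemma~2.6]{GSS13}. You instead compute the cap diameter directly and finish with the hyperbolic triangle inequality; the identity $\mo\T=d_h(H_0,H_1)$, the case split on the position of $0$, and the equality analysis are all handled correctly.

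There is, however, a real gap precisely at the step you yourself flag as the main obstacle. You write that the small cap ``is convex \dots\ so its Euclidean diameter is attained on the seam,'' but convexity only tells you the diameter is realized on the boundary, and that boundary consists of two spherical caps in addition to the seam; nothing you have said rules out the diameter being realized between a point on one face and a point on the other. What is actually needed is that the cap lies in the closed ball $D$ with center $|c|^{-1}\xi$ and radius $\rho/|c|$, so that $\diam W_j\le\diam D=2\rho/|c|$, with equality since the seam lies in $\overline{W_j}$. This containment genuinely uses the orthogonality relation $|c|^2=1+\rho^2$: writing $x=s\xi+v$ with $v\perp\xi$, add $\rho^2/|c|^2$ times the inequality $s^2+|v|^2\le1$ to $1/|c|^2$ times the inequality $(s-|c|)^2+|v|^2\le\rho^2$; the two coefficients sum to $1$ and the result simplifies (via $|c|^2-\rho^2=1$) to exactly $(s-1/|c|)^2+|v|^2\le\rho^2/|c|^2$. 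Once this computation is supplied, your proof is complete.
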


\begin{proof}
Fixing the value of $\mo\T,$ we shall find the configuration of $W_0$ and $W_1$ for which
$\min\{\diam W_0, \diam W_1\}$ is maximized.
Let $H_j=\partial W_j\cap\B^n$ for $j=0,1.$
There is a unique hyperbolic line $l$ in $\B^n$ which is perpendicular to both of $H_0$
and $H_1.$
The hyperbolic length $\delta$ of $l\cap\es$ is nothing but the hyperbolic distance
between $W_0$ and $W_1.$
Since $l$ is a part of a circle (possibly a line) intersecting $\partial\B^n$ perpendicularly,
there is a (two-dimensional) plane $\Pi$ containing $l$ and the origin.
Note that $\diam W_j\cap\Pi =\diam W_j$ for $j=0,1.$
Since $\B^n\cap\Pi$ is (hyperbolically) isometric $\B^2,$
the problem now reduces to the two-dimensional case.
Hence the inequality in the assertion follows from \cite[Lemma 2.6]{GSS13}%
\footnote{Note that the definition of the hyperbolic metric is different in \cite{GSS13}
from here by the factor 2.}.
Equality holds only if $l\cap\es$ is the line segment with the origin as its midpoint.
In this case, $\T=\T(\xi; r,1/r),$ where $\xi$ is one of the end points of $l$
and $r=\tanh(\delta/4).$
\end{proof}

\subsection{Separation theorem}
The following result is a generalization of Theorem~2.3 in \cite{GSS13}.
We note that we lose half of the modulus in the exponent though the previous lemma is sharp.

\begin{thm}\label{thm:sep}
Let $\es$ be a properly embedded semiannulus in $\B^n.$
Then the connected components $V_0$ and $V_1$ of $\B^n\setminus\es$ satisfy the inequality
\begin{equation}\label{eq:sep}
\min\{\diam V_0,\diam V_1\}\le Q_n \exp\left(-\frac12\mo\es\right),
\end{equation}
where $Q_n=4\exp(A_n/2).$
\end{thm}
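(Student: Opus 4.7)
The plan is to reduce the general case to Lemma \ref{lem:diam} by constructing, inside the given $\es$, a sub-semiring $\T$ whose boundary in $\B^n$ consists of two hyperbolic hyperplanes and whose modulus satisfies $\mo\T\ge \mo\es - A_n$. Once $\T$ is in hand, the components $W_0,W_1$ of $\B^n\setminus \T$ contain $V_0$ and $V_1$ (after relabeling), so Lemma \ref{lem:diam} and the elementary estimate $2/\cosh(x)\le 4e^{-x}$ yield
\begin{equation*}
\min\{\diam V_0,\diam V_1\}
\;\le\;\min\{\diam W_0,\diam W_1\}
\;\le\;\frac{2}{\cosh\!\bigl(\tfrac12\mo\T\bigr)}
\;\le\; 4e^{-\frac12(\mo\es-A_n)}
\;=\; Q_n\, e^{-\frac12\mo\es}.
\end{equation*}
The case $\mo\es\le A_n$ is trivial, since $\diam V_j\le 2\le 4\le Q_n e^{-\mo\es/2}$. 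So I henceforth assume $\mo\es>A_n$; in particular $\mo\es>0$, so Lemma \ref{lem:sep} gives $\dist(V_0,V_1)>0$ and $\overline{V_0}, \overline{V_1}$ are disjoint in $\R^n$.

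To construct $\T$, I would straighten $\B^n$ to $\Hn$ and exploit reflection symmetry across $\partial\Hn$. Proper embeddedness of $\es$ makes both $\overline{V_0}\cap\partial\B^n$ and $\overline{V_1}\cap\partial\B^n$ nonempty; pick $\xi$ in the latter, which is then disjoint from $\overline{V_0}$ by the previous paragraph. Apply the M\"obius map $M_\xi$ of Lemma \ref{lem:T}, sending $\B^n\to\Hn$ and $\xi\mapsto\infty$, and write $\es'=M_\xi(\es)$ and $V_j'=M_\xi(V_j)$. Doubling across $\partial\B^n$ is conjugated by $M_\xi$ to doubling across $\partial\Hn$, so $\widehat{\es'}$ is a ring in $\Sn$ with $\mo\widehat{\es'}=\mo\es$ by Lemma \ref{lem:sep}. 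Because $\xi\in\overline{V_1}$, we have $\infty\in\hat V_1'$, so $\hat V_0'$ is the bounded complementary component. Pick any $\eta\in\overline{V_0'}\cap\partial\Hn$, nonempty since $\overline{V_0}\cap\partial\B^n\ne\varnothing$ and $\xi\notin\overline{V_0}$; then $\eta\in\hat V_0'$ is separated from $\infty$ by $\widehat{\es'}$, and Theorem \ref{thm:teich} yields an annular subring $\A=\A(\eta;r_0,r_1)$ of $\widehat{\es'}$ with $\log(r_1/r_0)\ge \mo\es - A_n$.

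The payoff of placing the center $\eta$ on $\partial\Hn$ is that $\A$ is invariant under the reflection in $\partial\Hn$, so $\A\cap\Hn$ is a half-annulus bounded by two hemispheres centered on $\partial\Hn$; each meets $\partial\Hn$ orthogonally and is therefore a hyperbolic hyperplane of $\Hn$. Setting $\T:=M_\xi^{-1}(\A\cap\Hn)$ and using that $M_\xi^{-1}$ is a hyperbolic isometry $\Hn\to\B^n$, the semiring $\T\subset\es$ is properly embedded in $\B^n$ with boundary consisting of two hyperbolic hyperplanes, and $\mo\T=\log(r_1/r_0)\ge \mo\es-A_n$ by conformal invariance of the modulus together with the formula of Lemma \ref{lem:T}. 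Moreover, because $\A$ separates $\hat V_0'$ from $\hat V_1'$ in $\Sn$, its slice $\A\cap\Hn$ separates $V_0'$ from $V_1'$ in $\Hn$, so $\T$ separates $V_0$ from $V_1$ in $\B^n$; after relabeling, $V_j\subset W_j$ and the conclusion follows from the chain of inequalities displayed in the first paragraph.

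The delicate ingredient, I expect, is the coordinated choice of the basepoints $\xi$ and $\eta$ on the respective boundaries: only this forces the Teichm\"uller annulus $\A$ to be reflection-symmetric, and hence makes $\A\cap\Hn$ a half-annulus bounded by hyperbolic hyperplanes --- the very hypothesis of Lemma \ref{lem:diam}. A generic application of Theorem \ref{thm:teich} at an arbitrary interior point of $\hat V_0'$ would produce a Euclidean annulus whose slice with $\Hn$ is not so bounded, and the strategy would fail.
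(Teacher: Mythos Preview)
Your proof is correct and follows essentially the same strategy as the paper: double the semiring to a ring via Lemma~\ref{lem:sep}, push it to the half-space model by a M\"obius map sending one boundary point of $\es$ to $\infty$, apply Theorem~\ref{thm:teich} with center at a boundary point of the \emph{other} complementary component (hence on $\partial\Hn$), and pull the resulting reflection-symmetric annulus back to a semiannulus bounded by hyperbolic hyperplanes so that Lemma~\ref{lem:diam} applies. The paper packages the two basepoint choices into a single map $L(x)=M_{\xi_0}(x)-M_{\xi_0}(\xi_1)$ sending $\xi_0\mapsto\infty$ and $\xi_1\mapsto 0$, whereas you first map $\xi\mapsto\infty$ and then pick $\eta\in\partial\Hn$ afterward; the content is identical, and your closing remark correctly identifies why the center must lie on $\partial\Hn$. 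One minor bibliographic slip: the equality $\mo\T=\log(r_1/r_0)$ comes directly from \eqref{eq:modulus} and conformal invariance rather than from Lemma~\ref{lem:T}.
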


\begin{proof}
When $\mo\es\le 2\log(Q_n/2),$ the right-hand side of \eqref{eq:sep}
is at least $2.$
Therefore, \eqref{eq:sep} trivially holds.
We now assume that $\mo\es>2\log(Q_n/2)=A_n+2\log 2.$
By Lemma \ref{lem:sep}, the extended set $\hat\es$ is
a ring with $\mo\hat\es=\mo\es>A_n.$
Choose a point $\xi_j$ from $\overline V_j\cap\partial\B^n$ for each $j=0,1$ and
consider the M\"obius mapping $L:\Sn\to\Sn$ defined by $L(x)=M_{\xi_0}(x)-M_{\xi_0}(\xi_1),$
where $M_\xi$ is constructed in the proof of Lemma \ref{lem:T}.
By definition, $L(\B^n)=\Hn,~ L(\xi_0)=\infty$ and $L(\xi_1)=0.$
In particular, $L(\hat\es)$ separates $0$ from $\infty.$
Theorem \ref{thm:teich} now yields an annular subring $\A=\{x: r_0<|x|<r_1\}$ of $L(\hat\es)$
for some $0<r_0<r_1<+\infty$ with $\mo\A=\log(r_1/r_0)\ge \mo\es-A_n.$
We set $\T=L^{-1}(\A\cap\Hn)$ and let $W_j$ be the connected component
of $\B^n\setminus \T$ containing $V_j$ for $j=0,1.$
Then $\mo\T=\mo\A$ and Lemma \ref{lem:diam} now implies the inequalities
\begin{align*}
\min\{\diam V_0, \diam V_1\}
&\le\min\{\diam W_0, \diam W_1\} \\
&\le\frac 2{\cosh(\frac12\mo \T)} \\
&\le 4\exp\left(-\frac12\mo \T\right) \\
&\le 4\exp\left(-\frac12\mo\es+\frac12 A_n\right).
\end{align*}
Thus the assertion follows.
\end{proof}

\section{Applications to quasiconformal maps}
\subsection{Quasiconformal maps}
Modulus estimates are powerful tools to deal with general homeomorphisms
of domains such as solutions to degenerate Beltrami equations
(see, for instance, \cite{GMSV05} or \cite{GSS13}).
In this section, for simplicity,
we give several applications of the results presented above
to quasiconformal mappings.
More applications will be presented in our forthcoming paper.

For a definition and basic properties of quasiconformal maps,
we refer to V\"ais\"al\"a's book \cite{Vai71}
and a recent monograph \cite{GMP17}.
The most important property of quasiconformal mappings in our context is
quasi-invariance for the moduli of curve families.
That is to say, for a $K$-quasiconformal homeomorphism $f:G\to G'$ between
domains in $\Sn,$ we have the double inequality $K^{-1}\M(\Gamma)\le
\M(f(\Gamma))\le K\, \M(\Gamma)$
for all curve families $\Gamma$ in $G.$
Note also the following fact: a homeomorphism $f:G\to G'$ is $K$-quasiconformal
if and only if the double inequality $K^{-1}\M(\Gamma_\ring)\le \M(\Gamma_{f(\ring)})
\le K\M(\Gamma_\ring),$ equivalently
$$
K^{-1/(n-1)}\mo\ring\le\mo f(\ring)\le K^{1/(n-1)}\mo\ring,
$$
holds for every ring $\ring$ whose closure is contained in $G$ (see \cite[Cor.~36.2]{Vai71}).

\subsection{Conditions for continuity at boundary}
The next proposition is an extension of \cite[Prop.~3.1]{GSS13}
and it is easily verified by Theorem \ref{thm:sep}.

\begin{prop}\label{prop:lim}
Let $f:\B^n\to\B^n$ be a homeomorphism and $\xi\in\partial\B^n.$
The mapping $f$ extends continuously to the point $\xi$ if
$$
\lim_{r\to0+}\mo f(\T(\xi; r,R))=+\infty
$$
for some $R>0.$
\end{prop}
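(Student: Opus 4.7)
The plan is to apply Theorem~\ref{thm:sep} to the image semirings $f(\T(\xi;r,R))$ and deduce that the ``inner'' complementary component shrinks to a single point as $r\to 0^+$.

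First I would set up the complementary geometry in $\B^n$. By Lemma~\ref{lem:T}, each $\T(\xi;r,R)$ is a properly embedded semiring in $\B^n$ of modulus $\log(R/r)$, and its complement $\B^n\setminus\T(\xi;r,R)$ consists of two open connected components
\[
V_0(r)=\{x\in\B^n:|x-\xi|/|x+\xi|<r\}
\quad\text{and}\quad
V_1=\{x\in\B^n:|x-\xi|/|x+\xi|>R\}.
\]
The key observations I will need are that $V_1$ does not depend on $r$, the family $\{V_0(r)\}$ is monotone (i.e.\ $V_0(r_1)\subset V_0(r_2)$ for $r_1<r_2$), and since $|x-\xi|/|x+\xi|\to 0$ as $x\to\xi$ within $\B^n$, the sets $V_0(r)$ form a neighborhood basis of $\xi$ in $\B^n$.

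Next I would invoke the hypothesis. As $f$ is a homeomorphism of $\B^n$ onto itself, $f(\T(\xi;r,R))$ is again a properly embedded semiring, whose two complementary components are exactly $f(V_0(r))$ and $f(V_1)$. Theorem~\ref{thm:sep} then gives
\[
\min\{\diam f(V_0(r)),\,\diam f(V_1)\}\le Q_n\exp\!\left(-\tfrac12\mo f(\T(\xi;r,R))\right),
\]
whose right-hand side tends to $0$ by assumption. Since $f(V_1)$ is a fixed nonempty open set, $\diam f(V_1)$ is a positive constant independent of $r$, so for all sufficiently small $r$ the minimum is realized by the first term, and therefore $\diam f(V_0(r))\to 0$.

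Finally, I would take the nested intersection $\bigcap_{r>0}\overline{f(V_0(r))}$: this is a decreasing family of nonempty compact connected sets with vanishing diameter, so by Cantor's intersection theorem it collapses to a single point $y\in\overline{\B^n}$. Setting $f(\xi)=y$, continuity at $\xi$ then follows immediately: given any neighborhood $U$ of $y$, pick $r$ with $\overline{f(V_0(r))}\subset U$; since $V_0(r)$ is a neighborhood basis of $\xi$ in $\B^n$, every $x$ sufficiently close to $\xi$ lies in $V_0(r)$, forcing $f(x)\in U$. The only genuine subtlety is correctly identifying which of the two diameters in Theorem~\ref{thm:sep} must vanish, and this is handled cleanly by the $r$-independence of $V_1$; all the remaining steps are essentially soft topology.
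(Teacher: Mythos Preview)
Your proof is correct and follows essentially the same approach as the paper: apply Theorem~\ref{thm:sep} to the images $f(\T(\xi;r,R))$, use the $r$-independence of $V_1$ to force $\diam f(V_0(r))\to 0$, and conclude via the nested intersection of the closures $\overline{f(V_0(r))}$. Your write-up is in fact somewhat more explicit than the paper's (you spell out why the minimum in Theorem~\ref{thm:sep} must be attained by $f(V_0(r))$ and invoke Cantor's intersection theorem), but the underlying argument is identical.
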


\begin{proof}
Let $\es_r=f(\T(\xi; r,R))$ and denote by $V_0(r)$ and $V_1$
the images of the sets $\{x\in\B^n: |x-\xi|/|x+\xi|<r\}$ and
$\{x\in\B^n: |x-\xi|/|x+\xi|>r\}$ under the mapping $f,$ respectively.
Since $\mod\es_r\to+\infty,$
Theorem \ref{thm:sep} implies that $\diam V_0(r)\to0$ as $r\to0.$
Therefore, the cluster set $\bigcap_{0<r<R}\overline{V_0(r)}$
consists of one point, to which $f(x)$ converges as $x\to\xi$ in $\B^n.$
\end{proof}

If we had more precise information on the rate of convergence
of $\mo f(\T(\xi;r,R)),$ we could get an estimate of modulus of continuity
of $f(x)$ at the boundary point $\xi.$
We also have the following theorem.

\begin{thm}
Let $E$ be a subset of $\partial\B^n$ and $f:\B^n\to\B^n$
be a homeomorphism. Suppose further that for every $\xi\in E,$
$$
\lim_{r\to0+}\mo f(\T(\xi; r,R))=+\infty
$$
holds for some number $R=R_\xi>0.$
Then $f$ extends to a continuous injection of $\B^n\cup E$ into $\overline\B^n.$
\end{thm}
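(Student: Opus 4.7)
The plan is to define $\tilde f\colon \B^n \cup E \to \overline{\B^n}$ by $\tilde f|_{\B^n} = f$ and, for $\xi \in E,$ $\tilde f(\xi) = \lim_{x \to \xi,\, x\in\B^n} f(x);$ the limit exists by Proposition \ref{prop:lim}. Continuity at each $\xi\in E$ follows from the same diameter shrinking used in the proof of that proposition: Theorem \ref{thm:sep} applied to $\es_r=f(\T(\xi;r,R_\xi))$ gives $\diam f(V_0(\xi,r))\to 0$ as $r\to0^+,$ where $V_0(\xi,r)=\{x\in\B^n:|x-\xi|/|x+\xi|<r\}.$ Given $\varepsilon>0$ I pick $r$ with $\diam f(V_0(\xi,r))<\varepsilon;$ any $z\in\B^n\cup E$ close enough to $\xi$ then has $\tilde f(z)\in\overline{f(V_0(\xi,r))},$ so $|\tilde f(z)-\tilde f(\xi)|<\varepsilon.$ Moreover $\tilde f(E)\subset\partial \B^n,$ since $\tilde f(\xi)\in\B^n$ for some $\xi\in E$ would let continuity of $f^{-1}$ at $\tilde f(\xi)$ force $\xi=f^{-1}(\tilde f(\xi))\in\B^n.$

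For injectivity, the restriction $\tilde f|_{\B^n}=f$ is injective and $\tilde f(\B^n)=\B^n$ is disjoint from $\tilde f(E)\subset\partial\B^n,$ so only the injectivity of $\tilde f|_E$ requires attention. Suppose, for contradiction, that distinct $\xi_1,\xi_2\in E$ satisfy $\tilde f(\xi_1)=\tilde f(\xi_2)=\eta$ and set $\rho=|\xi_1-\xi_2|/|\xi_1+\xi_2|>0.$ In the favorable case $\rho\ge R_{\xi_1},$ put $\es=f(\T(\xi_1;r,R_{\xi_1})),$ $V_0=f(V_0(\xi_1,r))$ and $V_1=f(V_1(\xi_1,R_{\xi_1}));$ since $\mo\es\to\infty,$ Lemma \ref{lem:sep} provides $\dist(V_0,V_1)>0,$ whence $\overline{V_0}\cap\overline{V_1}=\varnothing$ in $\overline{\B^n}.$ But $\eta\in\overline{V_0}$ by the diameter estimate, and $\eta\in\overline{V_1}$ since any sequence $y_k\to\xi_2$ in $\B^n$ eventually lies in $V_1(\xi_1,R_{\xi_1})$ (as $|y_k-\xi_1|/|y_k+\xi_1|\to\rho\ge R_{\xi_1}$), so $f(y_k)\in V_1$ tends to $\eta.$ This contradiction gives injectivity, and the condition $\rho\ge R_{\xi_2}$ is handled symmetrically by swapping $\xi_1$ and $\xi_2.$

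The main obstacle is the residual case $\rho<\min(R_{\xi_1},R_{\xi_2}),$ in which $\xi_2$ sits in the boundary arc of $\T(\xi_1;r,R_{\xi_1})$ on $\partial\B^n$ rather than in the closure of $V_1(\xi_1,R_{\xi_1}),$ and symmetrically for $\xi_1.$ The naive argument then only yields $\eta\in\overline{V_0}\cap\overline{\es}$ for each semiring, which is not immediately contradictory. To handle this borderline situation I plan to double both semirings via Lemma \ref{lem:sep}, obtaining rings $\hat\es_1,\hat\es_2$ in $\Sn$ of arbitrarily large modulus, and apply Theorem \ref{thm:teich} to extract annular subrings centered at $\eta$ with modulus tending to infinity. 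The key observation is that $f(V_0(\xi_2,r_2))$ is a connected set contained in $\es_1$ that shrinks to $\eta$ while remaining disjoint from the small continuum $\hat V_0\ni\eta$ of $\hat\es_1,$ and symmetrically for $\hat\es_2;$ a careful diameter comparison via Corollary \ref{cor:C} applied to both doubled rings should then produce the needed contradiction.
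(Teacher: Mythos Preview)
Your continuity argument and the check that $\tilde f(E)\subset\partial\B^n$ are correct and match the paper. For injectivity the paper's route is shorter: it simply replaces $R_{\xi_1}$ by a small $R$ with $R<\rho$, so that $\xi_2\in\overline{V_1(\xi_1,R)}$; then sequences $z_k\to\xi_1$ and $z_k'\to\xi_2$ land in $V_0$ and $V_1$ respectively, forcing $\dist(f(V_0),f(V_1))=0$ and hence $\mo f(\T(\xi_1;r,R))=0$ by Lemma~\ref{lem:sep}, which is declared to contradict the hypothesis. Your ``easy case'' $\rho\ge R_{\xi_1}$ is exactly this argument with $R=R_{\xi_1}$; your residual case is precisely where the paper shrinks $R$ below $R_{\xi_1}$, tacitly assuming that the limit hypothesis persists for the smaller $R$. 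So your case split is a more scrupulous reading of the same idea rather than a different approach.

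The genuine gap is that your residual-case plan, as written, does not close. Knowing that $f(V_0(\xi_2,r_2))\subset\es_1$ shrinks to $\eta$ while remaining disjoint from $\hat V_0^{(1)}$ is not by itself contradictory: the bounded complementary component $\{|x-\eta|\le r_0\}$ of the Teichm\"uller annulus $\A_1\subset\hat\es_1$ may contain points of $\hat\es_1$ in addition to $\hat V_0^{(1)}$, so $f(V_0(\xi_2,r_2))$ can sit harmlessly inside that ball. Corollary~\ref{cor:C} applied to $\hat\es_1$ only yields $\diam\hat V_0^{(1)}\lesssim e^{-\mo\hat\es_1}\dist(\hat V_0^{(1)},\hat V_1^{(1)})$, which adds nothing to what Theorem~\ref{thm:sep} already gave you; running the symmetric estimate for $\hat\es_2$ just compares two quantities that both tend to zero and produces no contradiction. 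You still need the missing ingredient: either prove that the condition $\mo f(\T(\xi;r,R))\to\infty$ is inherited when $R$ is decreased (this is what the paper uses without comment, and once you have it your residual case disappears and the paper's short argument applies), or exhibit a continuum inside $\hat\es_1$ genuinely joining $\hat V_0^{(1)}$ to $\hat V_1^{(1)}$ so that the large modulus of $\hat\es_1$ is actually obstructed.
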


\begin{proof}
By the above proposition, we see that $f$ extends continuously to the set $E$
so that $f(E)\subset\partial\B^n.$
We show that the extended map $f$ is injective on $E.$
Suppose, to the contrary, that $f(\xi_1)=f(\xi_2)=:\omega_0$ for some $\xi_1, \xi_2
\in E$ with $\xi_1\ne\xi_2.$
We take $R>0$ so small that $\T(\xi_1; r,R)\cap\T(\xi_2; r,R)=\emptyset$
for $0<r<R.$
Let $V_0, V_1$ be the connected components of $\B^n\setminus\T$ with $\xi_1\in\overline V_0,$
where $\T=\T(\xi_1; r,R).$
Take two sequences $z_k, z_k'\in\B^n~(k=1,2,3,\dots)$ so that $z_k\to\xi_1$
and $z_k'\to\xi_2.$
Then $z_k\in V_0$ and $z_k'\in V_1$ for sufficiently large $k.$
In particular, $\dist(f(V_0), f(V_1))\le |f(z_k)-f(z_k')|$ for such a $k.$
Since $f(z_k)\to\omega_0$ and $f(z_k')\to\omega_0$ as $k\to\infty,$
we have $\dist(f(V_0),f(V_1))=0.$
By Lemma \ref{lem:sep}, we conclude that $\mo f(\T)=0,$ which contradicts
the assumption that $\mo f(\T(\xi_1; r,R))\to+\infty$ as $r\to0^+.$
\end{proof}

Letting $E=\partial\B^n,$ we obtain the following result.
(For the case when $n=2,$ see \cite{Bra07}, \cite[Cor.~3.3]{GSS13}.)

\begin{thm}\label{thm:exten}
A homeomorphism $f:\B^n\to\B^n$ extends to a homeomorphism $f:\overline\B^n\to
\overline\B^n$ if and only if for each $\xi\in\partial\B^n,$ there is an $R=R_\xi>0$ such that
$$
\lim_{r\to0+}\mo f(\T(\xi; r,R))=+\infty.
$$
\end{thm}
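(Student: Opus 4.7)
The plan is to prove the two implications separately: sufficiency follows from the preceding theorem together with a compactness argument, while necessity reduces to a direct modulus estimate using the fact that the inner complementary component of $\T(\xi;r,R)$ shrinks under the boundary extension of $f$ to the single point $f(\xi)$.

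\textbf{Sufficiency.} Assume the modulus condition holds for every $\xi\in\partial\B^n$. Applying the preceding theorem with $E=\partial\B^n$, I get a continuous injection $\bar f:\overline\B^n\to\overline\B^n$ extending $f$. Then $\bar f(\overline\B^n)$ is compact, hence closed in $\overline\B^n$, and contains $f(\B^n)=\B^n$, so it contains $\overline{\B^n}=\overline\B^n$; thus $\bar f$ is a continuous bijection between compact Hausdorff spaces, hence a homeomorphism.

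\textbf{Necessity.} Suppose now that $f$ extends to a homeomorphism of $\overline\B^n$ and fix $\xi\in\partial\B^n$ and any $R>0$. Set $\eta=f(\xi)\in\partial\B^n$, $\T_r=\T(\xi;r,R)$, and let $V_0(r)$ and $V_1$ be the connected components of $\B^n\setminus\T_r$ (with $\xi$ in the closure of $V_0(r)$). A direct computation shows that $\overline{V_0(r)}$ is contained in a Euclidean ball of radius $O(r)$ around $\xi$, that $V_1$ is independent of $r$, and that $\xi\notin\overline{V_1}$. Uniform continuity of $f$ on the compact set $\overline\B^n$ then yields $\overline{U_0(r)}\subset B(\eta,\epsilon_r)$ with $\epsilon_r\to 0$, where $U_0(r)=f(V_0(r))$, while $U_1=f(V_1)$ is a fixed continuum in $\B^n$ with $d:=\dist(\eta,\overline{U_1})>0$. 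For $r$ small enough that $\epsilon_r<d$, Lemma~\ref{lem:sep} identifies the double $\hat{f(\T_r)}$ as a ring with $\mo\hat{f(\T_r)}=\mo f(\T_r)$ and complementary continua $\hat U_0(r)=\overline{U_0(r)}\cup U_0(r)^*$ and $\hat U_1=\overline{U_1}\cup U_1^*$. Using the reflection identity $|x^*-\eta|=|x-\eta|/|x|$ (valid since $|\eta|=1$) together with $|x|>1-\epsilon_r$ on $U_0(r)$ and $|x|<1$ on $U_1$, I verify that $\hat U_0(r)\subset B(\eta,2\epsilon_r)$ and $\dist(\eta,\hat U_1)\ge d$. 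Consequently $\A(\eta;2\epsilon_r,d)$ is an annular subring of $\hat{f(\T_r)}$, and monotonicity of the modulus of ring domains gives
$$
\mo f(\T_r)\ge\log\frac{d}{2\epsilon_r}\longrightarrow+\infty\quad\text{as }r\to 0^+.
$$

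The main obstacle I anticipate is the bookkeeping around the reflected pieces $U_0(r)^*$ and $U_1^*$ entering the double from Lemma~\ref{lem:sep}: a priori the small set near $\eta$ could reflect to something large, and the far set could reflect close to $\eta$. The reflection identity above handles both cases cleanly, because the relevant subsets of $\B^n$ have $|x|$ bounded away from $0$.
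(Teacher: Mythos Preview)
Your sufficiency argument is exactly the paper's: the paper's entire ``proof'' is the sentence ``Letting $E=\partial\B^n,$ we obtain the following result,'' i.e., it invokes the preceding theorem just as you do, and you correctly add the compactness step to upgrade the continuous injection to a surjective homeomorphism.

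The paper does not prove the necessity direction at all---it is left implicit---so your argument there is genuinely supplementary. It is also correct. The key points all check out: $V_0(r)\subset B(\xi,2r)$ since $|x+\xi|<2$; $V_1$ depends only on $R$ and is bounded away from $\xi$; uniform continuity of the extended $f$ gives $\overline{U_0(r)}\subset B(\eta,\epsilon_r)$ with $\epsilon_r\to0$; and the doubling via Lemma~\ref{lem:sep} produces a genuine ring once $\epsilon_r<d$. The reflection identity $|x^*-\eta|=|x-\eta|/|x|$ for $|\eta|=1$ is exactly the right tool, and your bounds $|x|>1-\epsilon_r$ on $U_0(r)$ and $|x|<1$ on $U_1$ do control the reflected pieces as claimed (one needs $\epsilon_r\le 1/2$ for $\epsilon_r/(1-\epsilon_r)\le 2\epsilon_r$, which is harmless). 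The annular subring $\A(\eta;2\epsilon_r,d)$ then gives the required divergence of $\mo f(\T_r)$.
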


\subsection{Boundary extension of quasiconformal maps of the unit ball}
It is well known that a quasiconformal automorphism of $\B^n$ extends
to the boundary homeomorphically.
See Section 17 of \cite{Vai71} for more information on this topic.
Here is a version of such a theorem.

\begin{thm}\label{thm:Bn}
Let $f:\B^n\to\B^n$ be a $K$-quasiconformal mapping fixing the origin.
Then $f$ extends to a homeomorphism $\tilde f:\overline\B^n\to\overline\B^n$ so that
$$
|f(x)-\tilde f(\xi)|\le C(n) |x-\xi|^{\alpha/2},\quad x\in\B^n, \xi\in\partial\B^n.
$$
Here $\alpha=1/K^{1/(n-1)}$ and $C(n)$ is a constant depending only on $n.$
\end{thm}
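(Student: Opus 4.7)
The plan is to combine the boundary-extension criterion of Theorem~\ref{thm:exten} with the quantitative semiring separation estimate of Theorem~\ref{thm:sep}, exploiting only the quasi-invariance $K^{-1/(n-1)}\mo\ring\le\mo f(\ring)\le K^{1/(n-1)}\mo\ring$ of the modulus under a $K$-quasiconformal map. First, the formula $\mo\T(\xi;r,R)=\log(R/r)$ from Lemma~\ref{lem:T} together with the lower distortion bound yields
$$\mo f(\T(\xi;r,R))\ge\alpha\log(R/r)\to\infty\quad\text{as}\quad r\to 0^+$$
for every fixed $R>0$ and every $\xi\in\partial\B^n$, where $\alpha=1/K^{1/(n-1)}$. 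Theorem~\ref{thm:exten} then produces the homeomorphic extension $\tilde f:\overline\B^n\to\overline\B^n$ for free.

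For the quantitative H\"older bound, fix $\xi\in\partial\B^n$ and take $R=\tfrac12$. The components of $\B^n\setminus\T(\xi;r,\tfrac12)$ are $V_0=\{x\in\B^n:|x-\xi|/|x+\xi|<r\}$ and $V_1=\{x\in\B^n:|x-\xi|/|x+\xi|>\tfrac12\}$; the latter contains both the origin (where the ratio equals $1$) and the point $-\xi/2$ (where it equals $3$). Applying Theorem~\ref{thm:sep} to $\es_r=f(\T(\xi;r,\tfrac12))$ gives
$$\min\{\diam f(V_0),\diam f(V_1)\}\le Q_n\exp\bigl(-\tfrac12\mo\es_r\bigr)\le Q_n(2r)^{\alpha/2}.$$
To conclude that it is $\diam f(V_0)$ which collapses, I need a uniform-in-$\xi$ lower bound $\diam f(V_1)\ge\varphi(K,n)>0$. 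Since $f$ is a $K$-quasiconformal self-map of $\B^n$ fixing the origin and $-\xi/2\in V_1$, the classical quasiconformal Schwarz-type lemma (derivable from Gr\"otzsch-ring distortion for the pair $\{0,-\xi/2\}$ and $\partial\B^n$) supplies $|f(-\xi/2)|\ge\varphi(K,n)>0$. Hence for $r$ below a threshold $r_0=r_0(K,n)$ the minimum above is realized by $\diam f(V_0)$, so $\diam f(V_0)\le Q_n(2r)^{\alpha/2}$.

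For the final inequality, let $x\in\B^n$ and set $r:=|x-\xi|/|x+\xi|$. If $r<r_0$, then $x\in\overline{V_0}$ and $\tilde f(\xi)\in\overline{f(V_0)}$, so
$$|f(x)-\tilde f(\xi)|\le\diam f(V_0)\le Q_n(2r)^{\alpha/2};$$
coupled with $|x+\xi|\ge 2-|x-\xi|\ge 1$ when $|x-\xi|\le 1$, this gives $r\le|x-\xi|$ and hence the H\"older bound with constant $2^{\alpha/2}Q_n$ in this regime. The complementary range of $|x-\xi|$ is absorbed into $C(n)$ via the trivial estimate $|f(x)-\tilde f(\xi)|\le 2$. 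The main obstacle is the quasi-Schwarz lower bound on $|f(-\xi/2)|$: it is the only ingredient not contained in the preceding sections, and without it one cannot identify which of the two components in Theorem~\ref{thm:sep} shrinks as $r\to 0^+$, causing the whole argument to stall.
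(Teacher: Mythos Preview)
Your overall strategy coincides with the paper's: use Lemma~\ref{lem:T} and quasi-invariance of the modulus to feed Theorem~\ref{thm:exten} (for the extension) and Theorem~\ref{thm:sep} (for the quantitative bound). The only substantive difference lies in how you decide which side of \eqref{eq:sep} realizes the minimum, and here you have manufactured an obstacle that does not exist.

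You yourself note that the origin lies in $V_1$ (the ratio at $x=0$ equals $1>\tfrac12$). Since $f$ fixes $0$, this gives $0\in f(V_1)$. On the other hand, $\es_r=f(\T(\xi;r,\tfrac12))$ is a properly embedded semiring in $\B^n$, so the boundary component $\partial_1\es_r$ is a properly embedded (hence noncompact) $(n-1)$-ball, and therefore $\overline{f(V_1)}$ meets $\partial\B^n$. Picking any point $w\in\overline{f(V_1)}\cap\partial\B^n$ gives $\diam f(V_1)\ge|w-0|=1$ with no further input. No quasi-Schwarz lemma is needed; the hypothesis $f(0)=0$ together with proper embedding already pins down the ``large'' component. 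The paper does exactly this (choosing $R=1$, so that $0$ sits on $\partial_1\es$ and again $\diam V_1\ge1$) and obtains the explicit constant $C(n)=2Q_n$.

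There is also a cost to your detour. Routing through $\diam f(V_1)\ge\varphi(K,n)$ makes the threshold $r_0$ depend on $K$; when you then absorb the complementary range via the trivial bound $2\le 2(|x-\xi|/\delta_0)^{\alpha/2}$, the constant inherits a factor $\delta_0^{-\alpha/2}$ depending on $K$ through $\varphi(K,n)$. That is strictly weaker than the stated conclusion that $C$ depends only on $n$. Replacing $\varphi(K,n)$ by the free lower bound $1$ removes this dependence, and your argument then matches the paper's line for line (up to the inessential choice $R=\tfrac12$ versus $R=1$).
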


Indeed, the much better estimate $|f(x)-f(y)|\le 4\lambda_n^2|x-y|^{\alpha}$
for $x,y\in\B^n$ 
is known
(see \cite[Theorem 6.6.1]{GMP17}) where $\lambda_n$ is the Gr\"otzsch ring constant.  Moreover,
$\lambda_n$ can also be replaced by a constant independent of the dimension $n,$ see
\cite{AV88}.
Here, we give a proof of the above result as a simple application of our Theorem \ref{thm:sep}.

\begin{proof}
Let $\T=\T(\xi;r,1)$ for $\xi\in\B^n,~0<r<1$ and $\es=f(\T).$
Note that $\M(\Gamma_\es)\le K\,\M(\Gamma_\T)$ and thus
$\mo\es\ge K^{-1/(n-1)}\mo\T=K^{-1/(n-1)}\log(1/r)$ by Lemma \ref{lem:T}.
In particular, we see that $\mo f(\T(\xi;r,1))\to+\infty$ as $r\to0$ for each
$\xi\in\partial\B^n.$
Hence Theorem \ref{thm:exten} guarantees that $f$ extends to a homeomorphism
$\tilde f$ of $\overline\B^n.$
Fix $\xi\in\partial\B^n$
and consider the ring $\es=f(\T(\xi; \ep,1))$ properly embedded in $\B^n,$
Theorem \ref{thm:sep} now yields the following inequalities
$$
\min_{j=0,1}\diam V_j\le Q_n \exp\left(-\frac12\mo\es\right)
\le Q_n\exp\bigg( \frac{\log\ep}{2K^{1/(n-1)}} \bigg)
=Q_n\exp\left(\frac\alpha2\log\ep\right),
$$
where $V_1$ is the component of $\B^n\setminus\es$ satisfying $0\in\partial V_1$
and $V_0$ is the other one.
Note that $\tilde f(\xi)\in\partial V_0$ and that $\diam V_1\ge 1.$
If $(\alpha/2)\log\ep<-\log Q_n,$ the right-most term in the above inequalities
is less than 1, which implies
$$
\diam V_0\le Q_n\exp\big(\tfrac\alpha2\log\ep) \big)=Q_n \ep^{\alpha/2}.
$$
We now put $\ep_0=\exp(-(2/\alpha)\log Q_n)=Q_n^{-2/\alpha}.$
If $|x-\xi|<\ep_0,$ we have $|x-\xi|/|x+\xi|\le|x-\xi|/(2-|x-\xi|)<|x-\xi|<\ep_0.$
Letting $\ep=|x-\xi|,$ we obtain
$$
|f(x)-\tilde f(\xi)|\le\diam V_0\le Q_n|x-\xi|^{\alpha/2}.
$$
If $|x-\xi|\ge\ep_0,$ we make the trivial estimates
$$
|f(x)-\tilde f(\xi)|\le 2\le 2\left(\frac{|x-\xi|}{\ep_0}\right)^{\alpha/2}
=2Q_n|x-\xi|^{\alpha/2}.
$$
Thus we see that $C(n)=2Q_n$ works.
\end{proof}

\subsection{Boundary extension of quasiconformal maps of the half space}
In the case of the unit ball, the optimal H\"older exponent is known to be $1/K^{1/(n-1)}.$
In the assertion of Theorem \ref{thm:Bn}, however, 
we have the extra factor 2.
If we do not care about uniformity of the estimate, we can get rid of it.
For instance, in the case of half space $\Hn,$ we have a similar result
with an optimal exponent.

\begin{thm}\label{thm:Hn}
Let $f:\Hn\to\Hn$ be a $K$-quasiconformal homeomorphism fixing $e_n=(0,\dots,0,1).$
Suppose that $f(x)\to\infty$ as $x\to\infty$ in $\Hn.$
Then $f$ extends to a homeomorphism $\tilde f:\overline{\mathbb{H}}^n
\to\overline{\mathbb{H}}^n$ and,
for every $R>0,$ there exists a constant $C=C(R,K,n)>0$ such that
$$
|f(x)-\tilde f(\xi)|\le C|x-\xi|^\alpha
$$
whenever $\xi\in\partial\Hn,~ |\xi|\le R$ and $x\in\Hn,~|x-\xi|\le 1.$
Here $\alpha=1/K^{1/(n-1)}.$
\end{thm}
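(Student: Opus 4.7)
The plan is to reduce the theorem to a direct application of Theorem~\ref{thm:teich} on $\Sn$ by Schwarz reflection of $f$ across the hyperplane $\partial\Hn$. The decisive gain over the proof of Theorem~\ref{thm:Bn} is that the reflected map lets us use the full-ring bound Lemma~\ref{lem:TR} in place of the semiring estimate Theorem~\ref{thm:sep}, so the factor $1/2$ present in Lemma~\ref{lem:diam} is avoided and the sharp H\"older exponent $\alpha=1/K^{1/(n-1)}$ is recovered.

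I would first extend $f$ continuously to a homeomorphism $\tilde f\colon\overline{\Hn}\to\overline{\Hn}$. Transferring through a M\"obius map $M\colon\B^n\to\Hn$ reduces this to Theorem~\ref{thm:exten} applied to $g:=M^{-1}\circ f\circ M$, whose hypothesis is verified at every boundary point of $\B^n$ by quasi-invariance ($\mo g(\T(\eta;r,1))\ge K^{-1/(n-1)}\log(1/r)\to+\infty$), while the assumption $f(x)\to\infty$ as $x\to\infty$ supplies continuity at $M^{-1}(\infty)$. With $\sigma(y)=y-2y_n e_n$ the reflection in $\partial\Hn$, define $\hat f=\tilde f$ on $\overline{\Hn}$ and $\hat f=\sigma\circ\tilde f\circ\sigma$ on $\{x_n\le 0\}$; since $\tilde f$ preserves $\partial\Hn$ and $\sigma$ is the identity there, the two definitions agree on $\partial\Hn$, so $\hat f$ is a homeomorphism of $\Sn$ fixing $\infty,e_n,-e_n$, and the reflection principle for quasiconformal maps across a hyperplane (\cite[Section~35]{Vai71}) makes $\hat f$ $K$-quasiconformal.

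Now fix $\xi\in\partial\Hn$ with $|\xi|\le R$ and $x\in\Hn$ with $\rho:=|x-\xi|\le 1$, and set $\rho':=|f(x)-\tilde f(\xi)|$. Consider the ring $\ring=\Sn\setminus(C_0\cup C_1)$ with $C_0=[\xi,x]$ and $C_1=\{-te_n:t\ge 1\}\cup\{\infty\}$. The nearest point of $C_1$ to $\xi$ is $-e_n$, at distance $\sqrt{1+|\xi|^2}\ge 1>\rho$, so the annulus $\{y:\rho<|y-\xi|<\sqrt{1+|\xi|^2}\}$ is a subring of $\ring$ and $\mo\ring\ge\log(1/\rho)$. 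The image continua of $\hat f(\ring)$ contain $\{\tilde f(\xi),f(x)\}$ and $\{-e_n,\infty\}$, so Lemma~\ref{lem:TR}, after translating $\tilde f(\xi)$ to the origin and rescaling by $1/\rho'$, yields
$$
\mo\hat f(\ring)\,\le\,\log\Psi_n\bigl(|{-e_n}-\tilde f(\xi)|/\rho'\bigr).
$$
Combining with the quasiconformal distortion $\mo\hat f(\ring)\ge\alpha\,\mo\ring$ and the linear growth bound $\Psi_n(t)\le\lambda_n^2(1+t)$ (a consequence of property (h)) would yield, once $\rho$ is small enough for the asymptotics to bite, the estimate $\rho'\le 2\lambda_n^2\,|{-e_n}-\tilde f(\xi)|\,\rho^\alpha$.

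The main obstacle is converting this into a constant depending only on $R,K,n$. A uniform bound $|\tilde f(\xi)|\le R'(R,K,n)$ on $\{|\xi|\le R\}\cap\partial\Hn$ is needed, and it does not follow from continuity of $\tilde f$ alone; it rests on the classical fact that the family of $K$-quasiconformal self-maps of $\Sn$ fixing three prescribed points is equicontinuous (three-point-normalized quasisymmetry). Once $R'$ is in hand, the factor $|{-e_n}-\tilde f(\xi)|\le\sqrt{1+R'^2}$ is absorbed into $C$, and the regime of $\rho$ where the asymptotics of $\Psi_n$ are not yet effective is handled trivially by enlarging $C$, giving the stated H\"older estimate with $C=C(R,K,n)$.
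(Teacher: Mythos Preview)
Your argument is correct, but it follows a different route from the paper's proof, and the difference is worth recording. The paper never reflects the map. It keeps working with semirings: for the uniform bound on $|\tilde f|$ over $\{|x|\le 1+R\}$ it takes the half-annulus $\{x\in\Hn:1+R\le|x|\le R'\}$, doubles its \emph{image} across $\partial\Hn$, applies Theorem~\ref{thm:teich} to produce an annular subring, and then controls the outer radius by Lemma~\ref{lem:dst} (the Gr\"otzsch-ring Schwarz lemma) applied at the reference point $e_n$. For the H\"older estimate it again doubles the image of the half-annulus $\{x\in\Hn:r\le|x-\xi|\le1\}$ and invokes Corollary~\ref{cor:C}. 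You instead reflect $f$ once and for all to a $K$-quasiconformal self-map $\hat f$ of $\Sn$ fixing $\infty,e_n,-e_n$, and then run a genuine Teichm\"uller-ring comparison on the pair $[\xi,x]$ versus $[-\infty,-e_n]$ via Lemma~\ref{lem:TR}; the uniform bound on $|\tilde f(\xi)|$ you obtain from the three-point-normalized compactness of $K$-quasiconformal maps of $\Sn$. Your path is shorter and makes transparent why the exponent $\alpha$ is not halved, but it imports two outside facts (the reflection/removability principle for quasiconformal maps across a hyperplane and the normal-family bound for three-point-normalized maps), whereas the paper's proof is deliberately self-contained in the semiring framework developed in Section~4 and gives an explicit constant via $\varphi_{K,n}$ rather than a compactness argument. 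One small remark: the continuity of $g=M^{-1}\circ f\circ M$ at $M^{-1}(\infty)$ already follows from the modulus condition (quasi-invariance) and Theorem~\ref{thm:exten}; the hypothesis $f(x)\to\infty$ is used not there but to guarantee that $\hat f$ fixes $\infty$, which is what makes the three-point normalization and the choice $C_1\ni\infty$ work.
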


For the proof, we first prepare an estimate for $K$-quasiconformal automorphisms of $\Hn$
fixing the basepoint $e_n.$
For $0<r<1,$ we set $B(r)=\{x\in\Hn: |x-e_n|\le r|x+e_n|\}.$
Note that $B(r)$ is the closed ball centered at $\frac{1+r^2}{1-r^2}e_n$
with radius $\frac{2r}{1-r^2}$ (the so-called Apollonian ball).
Also note that $\bigcup_{0<r<1}B(r)=\Hn.$
The next result is a variant of the well-known quasiconformal Schwarz Lemma
and the function $\varphi_{K,n}(r)$ is known as the distortion function
(see \cite[Ch. 8]{AVV97}).

\begin{lem}\label{lem:dst}
Let $f:\Hn\to\Hn$ be a $K$-quasiconformal map fixing $e_n.$
Then $f$ maps $B(r)$ into $B(r'),$ where $r'=\varphi_{K,n}(r)
:=1/\gamma_n^{-1}(K\gamma_n(1/r)).$
\end{lem}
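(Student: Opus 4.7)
The plan is to reduce the lemma to the classical quasiconformal Schwarz lemma on the unit ball, by conjugating $f$ with a M\"obius isometry between $\Hn$ and $\B^n$ that sends $e_n$ to the origin and carries each Apollonian ball $B(r)$ to the concentric round ball $\{z\in\B^n:|z|\le r\}.$

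First I would take $\Phi:=Q\circ P:\Hn\to\B^n,$ where $P$ and $Q$ are the reflections appearing in the proof of Lemma \ref{lem:T} with $\xi=e_n$ (so that $M_{e_n}=P\circ Q=\Phi^{-1}$). A short direct calculation with the explicit formulas for $P$ and $Q,$ dual to the identity $|M_{e_n}(x)|=|x+e_n|/|x-e_n|$ proved there, yields the key formula
$$
|\Phi(y)|\,=\,\frac{|y-e_n|}{|y+e_n|}\,,\qquad y\in\Hn.
$$
In particular $\Phi(e_n)=0,$ and $\Phi$ maps $B(r)$ bijectively onto $\{z\in\B^n:|z|\le r\}.$

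Next I would set $g:=\Phi\circ f\circ\Phi^{-1}:\B^n\to\B^n.$ Because $\Phi$ and $\Phi^{-1}$ are conformal, $g$ is $K$-quasiconformal, and the normalisation $f(e_n)=e_n$ forces $g(0)=0.$ The classical quasiconformal Schwarz lemma on $\B^n$ then gives $|g(z)|\le\varphi_{K,n}(|z|)$ for all $z\in\B^n.$ A self-contained proof of that lemma proceeds in the usual way: for $z\in\B^n\setminus\{0\},$ compare the ring $\ring=\Sn\setminus\bigl([0,z]\cup(\Sn\setminus\B^n)\bigr)$ with its image $g(\ring);$ apply the inversion $x\mapsto x/|x|^2$ to bring both into Gr\"otzsch normal form; invoke Lemma \ref{lem:GR} and the $K$-quasi-invariance $\M(\Gamma_{g(\ring)})\le K\,\M(\Gamma_\ring);$ and conclude using the strict monotonicity of $\gamma_n.$

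Finally, for $y\in B(r)$ the monotonicity of $\varphi_{K,n}$ yields
$$
|\Phi(f(y))|=|g(\Phi(y))|\le\varphi_{K,n}(|\Phi(y)|)\le\varphi_{K,n}(r)=r'\,,
$$
so $f(y)\in B(r'),$ which is the assertion. The whole chain of reductions is routine once the displayed identity for $|\Phi(y)|$ is in place; verifying that identity by direct computation from the definitions of $P$ and $Q$ is the only non-trivial ingredient and so the main obstacle in the argument.
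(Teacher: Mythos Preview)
Your proof is correct and follows essentially the same route as the paper's. Both arguments conjugate $f$ by a M\"obius map taking $(\Hn,e_n)$ to a ball model and then invoke the extremal property of the Gr\"otzsch ring: you pass to $g:\B^n\to\B^n$ with $g(0)=0$ and quote the quasiconformal Schwarz lemma (whose proof you sketch via inversion and Lemma~\ref{lem:GR}), whereas the paper conjugates to the exterior of $\overline{\B}^n$ with the fixed point at $\infty$ and applies Lemma~\ref{lem:GR} directly. These differ only by the inversion $x\mapsto x/|x|^2$, so the substance is the same; your packaging via the known distortion bound $|g(z)|\le\varphi_{K,n}(|z|)$ is perhaps slightly cleaner to read.
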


\begin{proof}
Let $M=M_{-e_n}$ and $g=M^{-1}\circ f\circ M:\Sn\setminus\overline\B^n
\to\Sn\setminus\overline\B^n,$ where $M_\xi$ is given in the proof of Lemma \ref{lem:T}.
Note here that $M^{-1}(e_n)=\infty.$
For a fixed $x\in\Hn,$ we put $y=M^{-1}(x).$
Consider the ring $\ring=\ring(\overline\B^n,[y,\infty]),$ which is a rotation
of the Gr\"otzsch ring $R_{G,n}(|y|)$ about the origin.
Then the image $\ring'=g(\ring)$ is a ring separating $y'=g(y)$ and $\infty$ from $\overline\B^n.$
Lemma \ref{lem:GR} now implies the inequality $\M(\Gamma_{\ring'})\ge\gamma_n(|y'|).$
On the other hand, $K$-quasiconformality of $g$ implies
$\M(\Gamma_{\ring'})\le K\,\M(\Gamma_{\ring})=K\gamma_n(|y|).$
Hence, $\gamma_n(|y'|)\le K\gamma_n(|y|),$ equivalently,
$|y'|\ge \gamma_n^{-1}(K\gamma_n(|y|)).$
Note that $x\in B(r)$ precisely if $|y|\ge 1/r.$
Thus the assertion follows.
\end{proof}

\subsection{Proof of Theorem \ref{thm:Hn}}
Note that $f$ can be extended to a homeomorphism $\tilde f$ of $\overline{\mathbb{H}}^n$
by Theorem \ref{thm:Bn}, because $\Hn$ and $\B^n$ are M\"obius equivalent.

We first show the claim that there is a constant $\rho=\rho(R,K,n)>0$ such that
$|f(x)|\le \rho$ for all $x\in\Hn$ with $|x|\le 1+R.$
Let $\es=\{x\in\Hn: 1+R\le |x|\le R'\},$ where $R'$ is chosen so that
$\log [R'/(1+R)]=(A_n+\log 2)/\alpha,$ where $A_n$ appears in \S \ref{ssec:An}.
Then, as in the proof of Theorem \ref{thm:Bn}, we have
$$
\mo\es'\ge K^{-1/(n-1)}\mo\es=\alpha\log \frac{R'}{1+R}=A_n+\log 2>0.
$$
We may reflect $\es'=f(\es)$ in the hyperplane $x_n=0$ as before
to obtain a ring $\hat\es'=\ring(C_0, C_1)$ with $\mo\hat\es'=\mo\es'.$
Since $\mo\hat\es'>A_n,$ Theorem \ref{thm:teich} yields an annular
subring $\A$ of $\hat\es'$ of the form $r_0<|y-a|<r_1,$ where
$\log(r_1/r_0)\ge \mo\es'-A_n\ge \log 2$ and $a=\tilde f(0)\in C_0.$
In view of the fact that $e_n=f(e_n)\in C_0,$ we have $|e_n-a|\le r_0$
and thus $|a|\le 1+r_0.$
Noting $w=f(R'e_n)\in C_1,$ we have $|w-a|\ge r_1.$
On the other hand, since $R'e_n\in B(r)$ with $r=(R'-1)/(R'+1),$
Lemma \ref{lem:dst} implies $w\in B(r')$ for $r'=\varphi_{K,n}(r).$
In particular, we obtain $|w|\le (1+r')/(1-r')$ and thus
$$
r_1\le |w-a|\le |w|+|a|\le \frac{1+r'}{1-r'}+1+r_0.
$$
Noting the inequality $2r_0\le r_1,$ we finally have $r_0\le 2/(1-r').$
Since the set $C_0=f(\{x\in\Hn: |x|\le R\})$ is contained in the ball
$|y-a|\le r_0,$ the claim follows with $\rho=1+4/(1-r')$.

By the last claim, we have $|f(x)|, |\tilde f(\xi)|\le\rho$ for $\xi\in\partial\Hn$ with $|\xi|\le R$
and $x\in\Hn$ with $|x-\xi|\le 1.$
For such a point $\xi$ we consider the semiring $\es=\es(\xi;r,1)=\{x\in\Hn:r\le |x-\xi|\le 1\}$
for $0<r<\delta$ and its image $\es'=f(\es),$ where $\delta$ is determined by the relation
$-\alpha\log \delta=A_n+\log 2.$
Then, as above, we have 
$$
\mo \es'\ge K^{-1/(n-1)}\mo\es=\alpha\log\frac1r>A_n+\log 2.
$$
Let $\hat\es$ be the double of $\es.$
An application of Corollary \ref{cor:C} with $B=A_n+\log 2$ to $\hat\es'=\ring(C_0,C_1)$
gives us the estimate
$$
\diam C_0\le Me^{-\mo\es'}\dist(C_0,C_1)
\le Mr^\alpha\dist(C_0,C_1),
\quad M=4e^{A_n}.
$$
Since $\tilde f(\xi)\in C_0$ and $f(e_n)=e_n\in C_1,$
we have $\dist(C_0,C_1)\le |\tilde f(\xi)-e_n|\le \rho+1.$
Therefore, if $|x-\xi|=r<\delta,$ we obtain
$$
|f(x)-\tilde f(\xi)|\le \diam C_0\le (\rho+1)M|x-\xi|^\alpha.
$$
If $|x-\xi|\ge \delta,$ we have
$$
|f(x)-\tilde f(\xi)|\le 2\rho\le 2\rho\left(\frac{|x-\xi|}{\delta}\right)^\alpha
=\rho M|x-\xi|^\alpha.
$$
Hence we obtain the required inequality with $C=(\rho+1)M.$\QED

\noindent
{\bf Acknowledgements.}
The sudden death of Prof. Stephan Ruscheweyh, a leading researcher in geometric function theory,
has left an irreplaceable gap in the research community. His vision was to establish forums for
international meeting of colleagues from different corners of the world and for presentation of latest research  ideas, 
by organizing a series of the CMFT conferences and by founding the CMFT journal. 
His ideas to apply computational methods to function theoretic problems will continue 
to inspire the research of geometric function theory for the many years to come.
The authors would like to take this opportunity to express their sincere appreciation for
his lifetime achievements.

Finally, the authors would also like to thank the referee for careful checking the
manuscript and suggestions.


\end{document}